\numberwithin{equation}{section}
\theoremstyle{definition}
\theoremstyle{plain}
\newtheorem{definition}{Definition}[section]
\newtheorem{theorem}[definition]{Theorem}
\newtheorem{proposition}[definition]{Proposition}
\newtheorem{lemma}[definition]{Lemma}
\newcommand{\cc}{\mathbf{C}}
\newcommand{\zz}{\mathbf{Z}}
\newcommand{\rr}{\mathbf{R}}
\newcommand{\h}{\mathfrak{h}}
\newcommand{\sij}{s_{ij}}
\DeclareMathOperator{\Hom}{\mathrm{Hom}}
\DeclareMathOperator{\Ind}{\mathrm{Ind}}
\DeclareMathOperator{\Res}{\mathrm{Res}}
\DeclareMathOperator{\syt}{\mathrm{SYT}}
\newcommand{\bfmu}{\boldsymbol{\mu}}
\newcommand{\la}{\langle}
\newcommand{\ra}{\rangle}
\newcommand{\fu}{\mathfrak{u}}
\newcommand{\ct}{{\rm ct}}
\begin{document}
%%% OUTLINE %%%%

%\begin{enumerate}[label=\arabic*.,itemsep=.2in]
%\item Introduction
% 	\begin{enumerate}
%		\item statement of main theorem
%	(and definitions necessary to understand it)
%		\item Context: Previous or related work
%	(why is the result useful?)
%	\end{enumerate}
%
%\item Definitions and preliminaries (standard)
%
%\item Proof of main theorem(s) (Write this first)
%
%\item Examples (if any)
%\end{enumerate}
%\newpage
%%% PAPER %%%

\title{Classification of irreducible $\fu$-diagonalizable $H_{\ell,n}$-modules}
\author{Elizabeth Manosalva P.}
\thanks{This work was supported by ANID-FONDECYT, Chile Postdoctoral Grant \#3230329}

\maketitle
\begin{abstract}
We give a classification for the irreducible $\fu$-diagonalizable representations of the degenerate affine Hecke algebra of type $G(\ell,1,n)$. Precisely we show that such $H_{\ell,n}$-modules are indexed by $\ell$-skew shapes and that the representation indexed by a skew shape $D$ has a basis of eigenvectors indexed by standard Young tableaux of shape $D$.
	
\end{abstract}

\section{Introduction}

\subsection{Context} 
This article is contribution to the study of the representations of the affine Hecke algebras, in particular of the degenerate affine Hecke algebra of type $G(\ell,1,n)$, also known as graded Hecke algebra for the group $G(\ell,1,n)$.

 The affine Hecke algebra arise in several combinatoric and algebraic frameworks, since is closely related  to the geometry and representation theory of a semisimple Lie group and plays an important role in the Springer correspondence and the Langlands classification. Moreover Lusztig's work \cite{Lusztig2} shows that it is possible to recover geometric information contained in the affine Hecke algebra from the corresponding Hecke algebra. We also recall that the theory of intertwining operators originates from the study of affine Hecke algebra representations \cite{MatsumotoGraded} and it has played an important role in the theory of orthogonal polynomials, in particular the study of Macdonald polynomials \cite{CherednikGraded}, \cite{KSGraded}, \cite{OpdamGraded}.

More recently this algebra has played an important role in the study of the unitary representations of the rational Cherednik algebra of type $G(\ell,1,n)$ (or cyclotomic rational Cherednik algebra) thus giving the motivation for this work. The cyclotomic rational Cherednik algebra possesses a subalgebra which is a $G(\ell,1,n)$-analog of the degenerate affine Hecke algebra. Hence the classification theorem of the irreducible diagonalizable representations of $H_{\ell,n}$ given in this article together with results due to Griffeth \cite{SG-UnitaryReps}, Ciubotaru \cite{Ciubotaru} and Huang-Wong \cite{HuangWong} leads us to the combinatorial formula for the $V^*$-homology of the unitary representations of the cyclotomic rational Cherednik algebra \cite{FGM}. As the part of the proof having to do with the representation theory of $H_{\ell,n}$ was only sketched in \cite{FGM} this work comes to complement said article by delivering the detailed results in that matter.

%Since the cyclotomic rational Cherednik algebra possesses a subalgebra which is a $W$-analog of the degenerate affine Hecke algebra the classification theorem of the irreducible diagonalizable representations of $H_{\ell,n}$ given in this article together with the classification and graded dimensions of the irreducible unitary representations of the cyclotomic rational Cherednik algebra in category $\Oc$ obtained by Griffeth in \cite{SG-UnitaryReps} and results of Ciubotaru \cite{Ciubotaru} and Huang-Wong \cite{HuangWong}, we were able to obtain  in \cite{FGM} a non-negative combinatorial formula for the $V^*$-homology of the unitary representations of the cyclotomic rational Cherednik algebra.

The techniques used to obtain the results in this paper are mainly combinatorial, taking advantage of the combinatorial behavior of the group $G(\ell,1,n)$ and its representation as well as the affine Hecke algebra attached to $G(\ell,1,n)$. This work is intended to be mostly self-contained and states most of the needed definitions and results used in it. Nonetheless some results involving the cyclotomic rational Cherednik algebra may be needed and for which we refer  to \cite{EtingofGinzburg} and \cite{EtingofMaLectures}, for instance. 

\subsection{Statement of results}

Here we will state the main theorem of this work as well as the objects involved somewhat more precisely, referring to the body of the paper when appropriate. A detailed explanation of the combinatorial objects involved in the statement of Theorem \ref{MainThm} are given in further sections. For the moment, as a guide to the reader we mention:
 Let $H_{\ell,n}$ be the degenerate affine Hecke  algebra (\ref{SubsectionHln}). Let $\fu$ be a subalgebra of $H_{\ell,n}$ with $u_i,\zeta_i\in\fu$ (\ref{SubsectionSubalgebraU}). Given $M$ an $H_{\ell,n}$-module, $M$ is called $\fu$-diagonalizable if it has a basis consisting of simultaneous eigenvectors for $\fu$.
  An $\ell$-skew shape $D$ is a sequence $D=(D^0,D^1,\ldots D^{\ell-1})$ of $\ell$ skew diagrams (\ref{SubsectionSkewShapes}). For a box $b$ of $D$ in the component $D^j$ we write $\ct(b)=x-y$ and $\beta(b)=j$ for the \emph{content} and \emph{coordinate} of $b$, respectively. A standard Young tableau $T$ of $D$ is a filling of the boxes of $D$ with positive integer numbers which is left to right and top to bottom increasing in each coordinate $D^i$ of $D$ and each connected component. These ingredients given, we may state the theorem.

\begin{theorem}\label{MainThm}
	Let $M$ be an irreducible $\fu$-diagonalizable $H_{\ell,n}$-module and $m\in M$ satisfies $$u_im=a_im\quad\hbox{and}\quad \zeta_im=\zeta^{b_i}m\quad\hbox{for }1\leq i\leq n.$$ Then there is a standard Young tableau $T$ on a $\ell$-skew shape $D$ such that $a_i=\ell\ct(T^{-1}(i))$ and $b_i=\beta(T^{-1}(i))$ for $1\leq i\leq n$ and $M\cong S^D$, and moreover $T$ and $D$ are unique up to diagonal slides of their connected components.
\end{theorem}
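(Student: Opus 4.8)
The plan is to adapt the Okounkov--Vershik method (in the form developed for affine Hecke algebras) to the present setting. Write the weight of $m$ as the pair of sequences $\mathbf a=(a_1,\dots,a_n)$ and $\mathbf b=(b_1,\dots,b_n)$, with each $b_i\in\{0,1,\dots,\ell-1\}$. The first step is to extract combinatorial constraints on $(\mathbf a,\mathbf b)$ by restricting the action to the rank $\le 2$ subalgebras generated by the $\zeta_j$, by $u_i,u_{i+1}$ and by $s_i$. For fixed $i$ put $m'=s_im$; from $s_i\zeta_i=\zeta_{i+1}s_i$ one sees that $m'$ is again a $\zeta$-eigenvector, with $b_i$ and $b_{i+1}$ interchanged, while the commutation relations of $H_{\ell,n}$ expressing $u_is_i-s_iu_{i+1}$ (and $u_{i+1}s_i-s_iu_i$) in terms of $1$ and the idempotent $\pi_i=\tfrac1\ell\sum_{r=0}^{\ell-1}\zeta_i^r\zeta_{i+1}^{-r}$ --- which acts on $m$ as $0$ or $1$ according as $b_i\neq b_{i+1}$ or $b_i=b_{i+1}$ --- determine the action of $u_i,u_{i+1},s_i$ on $\mathrm{span}\{m,m'\}$ completely. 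Analysing this two-dimensional picture (using $s_i^2=1$, so that $m'\neq 0$), together with the rank $3$ braid relations, yields a finite list of admissibility conditions on $(\mathbf a,\mathbf b)$ of the kind familiar from the symmetric-group case, now refined by the coordinates: in particular $(a_i,b_i)\neq(a_{i+1},b_{i+1})$, and when $b_i=b_{i+1}$ the difference $a_{i+1}-a_i$ is a nonzero multiple of $\ell$. Alongside this, the same local analysis produces, for each admissible adjacent swap $s_i\nu$ of a weight $\nu$, a normalized intertwining operator $\tau_i$ carrying a $\fu$-eigenvector of weight $\nu$ to one of weight $s_i\nu$ (and acting by an explicit scalar in the degenerate cases), with the values of $\tau_i^2$ computed and the braid relations verified.

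Second, a purely combinatorial argument identifies the admissible weights with the content--coordinate vectors of standard Young tableaux on $\ell$-skew shapes. By induction on $n$ one shows that $(\mathbf a,\mathbf b)$ satisfies the conditions of the first step if and only if there is an $\ell$-skew shape $D$ with $n$ boxes and a standard filling $T\colon D\to\{1,\dots,n\}$ with $a_i=\ell\,\ct(T^{-1}(i))$ and $b_i=\beta(T^{-1}(i))$ for all $i$: at stage $i$ one adds to the diagram built so far a box of content $a_i/\ell$ in component $b_i$, and the admissibility conditions are exactly what guarantee that such a box is addable and that $T$ remains standard. The same induction shows that the connected components of $D$ and the filling inside each are forced by $(\mathbf a,\mathbf b)$, whereas the relative diagonal placement of distinct components is not; hence $(D,T)$ is determined precisely up to diagonal slides of connected components.

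Third, one reconstructs $M$ from its weights. Applying products of the $\tau_i$ to $m$, for every $S\in\syt(D)$ one obtains a nonzero $\fu$-eigenvector $m_S\in M$ of the weight attached to $S$ by the dictionary of the second step; these weights are pairwise distinct by the uniqueness just proved, so the $m_S$ are linearly independent and their span is a nonzero $H_{\ell,n}$-submodule, hence all of $M$ by irreducibility. Consequently each $\fu$-weight space of $M$ is one-dimensional, and the structure constants of the generators of $H_{\ell,n}$ in the basis $\{m_S\}$ are forced to be exactly those defining the seminormal module $S^D$; therefore $M\cong S^D$. Combined with the second step, this gives the formulas $a_i=\ell\,\ct(T^{-1}(i))$, $b_i=\beta(T^{-1}(i))$ and the uniqueness of $(D,T)$ up to diagonal slides.

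The main obstacle is the control of the intertwiners in the third step: one must show that $m_S\neq 0$ for \emph{every} $S\in\syt(D)$ and that distinct $S$ yield distinct weights --- equivalently, that $\tau_i$ does not vanish outside the prescribed degenerate cases and that admissible adjacent transpositions act transitively on $\syt(D)$. This rests on the exact values of the scalars $\tau_i^2$ and on the reconstruction lemma from the second step (a standard tableau on a connected component is recovered from the ordered list of the contents of its entries). Verifying this, along with the braid relations for the $\tau_i$ on $M$ --- which again reduces to the rank $\le 3$ situation --- is where the bulk of the work lies.
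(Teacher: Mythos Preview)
Your approach is essentially the paper's: local rank-$\le 2$ analysis plus the braid relation to constrain the weight, a combinatorial induction identifying admissible weights with content--coordinate vectors of standard tableaux on an $\ell$-skew shape, and then transitivity of the intertwiners $\tau_i$ on $\syt(D)$ to pin down $M$.

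Two small points. First, a mis-statement in your first step: when $b_i=b_{i+1}$ the rank-$2$ analysis only gives $a_i\neq a_{i+1}$, not that $a_{i+1}-a_i$ is a multiple of $\ell$; boxes in different connected components of the same $\beta$-coordinate may have contents whose difference is not an integer. The values $a_{i+1}-a_i=\pm\ell$ are exactly where $\tau_i^2$ vanishes, not a general admissibility constraint. Second, the paper organizes the final step slightly differently from yours: rather than showing the $m_S$ span and matching structure constants, it has already proved $S^D$ irreducible separately, and then builds a nonzero $H_{\ell,n}$-map $S^D\to M$ by sending the row-reading basis vector $\psi_{T_0}$ to an eigenvector $m\in M$ of the same weight and extending via $\tau_{i_p}\cdots\tau_{i_1}\psi_{T_0}\mapsto\tau_{i_p}\cdots\tau_{i_1}m$; well-definedness is the braid relations for the $\tau_i$, and the conclusion follows from Schur's lemma. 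This sidesteps having to check directly that each weight space of $M$ is one-dimensional.
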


This theorem gives a classification of the irreducible $\fu$-diagonalizable $H_{\ell,n}$-modules, we provide an algorithm to determine explicitly the diagram $D$ which represents the corresponding equivalence class in the proof of this theorem. We recall that the case $\ell=1$ was first worked out by Cherednik \cite{CherednikRepsAHalg}.

\section{Definitions and preliminaries}

This section is mostly dedicated to give in detail the needed definitions, notations and state results in order to prove the main theorem of this paper.

\subsection{Combinatorics.}\label{SubsectionCombinatorics}We recall some definitions and basic notions of partitions and standard young tableaux used in further sections.

A \emph{partition} is a non-increasing sequence $\lambda_1\geq\lambda_2\geq\ldots\geq \lambda_r\geq 0$ of positive integers. We identify a partition with its Young diagram and visualize it as a collection of boxes. The \emph{diagram} of $\lambda$ is the collection of points $(x,y)$ for $x$ and $y$ integer numbers satisfying $1\leq y\leq \ell$ and $1\leq x\leq \lambda_y$, which we think of as a subset of $\rr^2$. 

For a fixed integer $\ell>0$, an \textit{$\ell$-partition} is a sequence $\lambda=(\lambda^0,\lambda^1,\ldots,\lambda^{\ell-1})$ of partitions, some of which may be empty. If there are $n$ total boxes in $\lambda$ then we say that $\lambda$ is an \emph{$\ell$-partition of $n$} and we write $P_{\ell,n}$ for the set of all $\ell$-partitions of $n$.

We order the boxes of each $\ell$-partition as follows, let $b=(x,y)$ and $b'=(x',y')$ then 
\begin{equation*}
	b\leq b' \Leftrightarrow \beta(b)=\beta(b') \hbox{ with } x'-x\in\zz_{\geq 0}\hbox{ and } y'-y\in\zz_{\geq 0}
\end{equation*}

Let $\lambda$ be an $\ell$-partition of $n$, we define a \textit{standard Young tableau} of $\lambda$ as the filling of the boxes of $\lambda$ with the numbers $1,2,\ldots,n$ which is left to right and top to bottom increasing in each coordinate and think of it as a bijection between the boxes of $\lambda$ and the set $\{1,2,\ldots,n\}$. We denote by SYT($\lambda$) to the set of all standard Young tableau of $\lambda$.
For each box $b=(x,y)$ of the partition $\lambda^k$ of an $\ell$-partition $\lambda$  we define the \textit{content} and \textit{coordinate} of $b$ as $\ct(b):=x-y$ and $\beta(b):=k$, respectively.

\subsection{The group $G(\ell,1,n)$}\label{GroupGl1n}

By the classification and notation of Shephard and Todd \cite{ShephardTodd-URG} for complex reflection groups we denote by $G(\ell,1,n)$ to the group abstractly defined as the wreath product of the cyclic group of $\ell$ elements $\bfmu_\ell$ and the symmetric group $S_n$, which acts on $\h=\cc^n$ in the obvious way. Let $y_1,\ldots,y_n$ be the standard basis of $\h$ then the group $G(\ell,1,n)$ consists of all matrices of size $n$ by $n$ with exactly one non-zero entry in each row and column, which is an $\ell$th  root of unity. 

Note that with this characterization, the group $G(1,1,n)$ is the group of permutation matrices of size $n$ by $n$, which is isomorphic to the symmetric group $S_n$. And the group $G(2,1,n)$ is the group of signed permutation matrices also known as the Weyl group of type $B_n$.

For a fixed primitive $\ell$th root of unity $\zeta$, we denote by $\zeta_i$ to the diagonal matrix with ones all over the diagonal except in the position $i$ where there is $\zeta$, $\sij$ denotes the permutation matrix which exchanges coordinates $i$ and $j$, and $s_i=s_{i,i+1}$. With this notation we can write uniquely each element of $G(\ell,1,n)$ as $$\zeta^aw, \mbox{ with }w \in S_n, a\in\bfmu_\ell^n$$
where $\zeta^a=\zeta_1^{a_1}\cdots\zeta_n^{a_n}$.

The irreducible complex representations of $G(\ell,1,n)$ are in bijection with $\ell$-partitions $\lambda$ of $n$, and for $\lambda\in P_{\ell,n}$ we write $S^\lambda$ for the corresponding irreducible representation \cite{IrrG}. The $\cc G(\ell,1,n)$-module $S^\lambda$ possesses a basis indexed by the set SYT($\lambda$). Also we note that $$\dim S\lambda=n!\prod_{i=1}^\ell\prod_{b\in\lambda^i}\frac{1}{h_b}$$ where $h_b$ is the length of the hook with respect to the box $b$.

We write $\h=\cc^n$ for the defining representation, which is irreducible for $\ell>1$. Given $\lambda\in P_{\ell,n}$ we define its \emph{transpose} $\lambda^t$ as the $\ell$-partition obtained from $\lambda$ by cycling its components one spot to the left and transposing them all, i.e. 
\begin{equation*}
	\lambda^t=\big((\lambda^1)^t,(\lambda^2)^t,\ldots,(\lambda^{\ell-1})^t,(\lambda^0)^t \big)
\end{equation*}
where $\mu^t$ denotes the classical transpose of the partition $\mu$. 
With this indexing of the complex irreducible representations of $G(\ell,1,n)$, the representation indexed by $\lambda^t$ is the tensor product of the representation $\Lambda^n\h$.

\subsection{Jucys-Murphy elements.}\label{SubsectionJMelements} The following analogs of the Jucys-Murphy elements in the group algebra $\cc G(\ell,1,n)$ were introduced in \cite{IrrG}. In this section we mention some properties which will be useful in further sections. Define 

$$\phi_i=\sum_{\substack{1\leq j<i \\ 0\leq k\leq \ell-1}}\zeta_i^ks_{ij}\zeta_i^{-k}=\sum_{\substack{1\leq j<i \\ 0\leq k\leq \ell-1}}\zeta_j^ks_{ij}\zeta_j^{-k}\quad 1\leq i\leq n.$$

Besides the expected property for $\phi_i$ (which is to commute with every element of 
$\cc G(\ell,1,i-1)$), elements $\phi_1,\ldots,\phi_n$ are pairwise commutative and satisfy the following relations 
		\begin{eqnarray}
		\zeta_i\phi_j & = &\phi_j\zeta_i \mbox{ for } 1\leq i\leq n \mbox{ and } 1\leq j\leq n. \\
		 s_i\phi_j & = & \phi_js_i \mbox{ for } j\neq i,i+1. \\
		 s_i\phi_i  & = & \phi_{i+1}s_i-\pi_i \mbox{ for } 1\leq i\leq n-1\mbox{, where }\pi_i=\sum_{0\leq k\leq \ell-1}\zeta_i^k\zeta_{i+1}^{-k}\label{JMc}.
	\end{eqnarray}

\subsection{Subalgebra $\fu$}\label{SubsectionSubalgebraU}
Let $\fu$ be the subalgebra of $\cc G(\ell,1,n)$ generated by the Jucys-Murphy elements $\phi_1,\ldots,\phi_n$ and the diagonal matrices $\zeta_i,\ldots,\zeta_n$. The action of $\fu$ over a $\cc G(\ell,1,n)$-module $S^\mu$ is given by 
\begin{align*}
	\phi_iv_T & =\ell\ct(T^{-1}(i))(v_T)\\
	\zeta_iv_T & =\zeta^{B(T^{-1}(i))}v_T
\end{align*}
where the set $\{v_T|T\in SYT(\mu)\}$ is a basis for $S^\mu$.

Let $M$ be a $\fu$-module, for a given $\fu$-eigenvector $m\in M$ we define the \emph{weight} of the vector $m$ to be the tuple $$wt(m)=(a_1,\ldots,a_n,\zeta^{b_1},\ldots,\zeta^{b_n})$$ if $\phi_im=a_i m$ and $\zeta_im=\zeta^{b_i} m$ for $1\leq i\leq n$. 

\begin{lemma}\label{LemmaIntertwiners}
\begin{enumerate}
	\item The algebra $\fu$ acts semisimple on each $\cc G(\ell,1,n)$-module $M$.
	\item Let $M$ be a $\cc G(\ell,1,n)$-module and let $m\in M$ be a $\fu$-weight vector with $$wt(m)=(a_1,\ldots,a_n,\zeta^{b_1},\ldots,\zeta^{b_n})$$ then \begin{align*}
		(a_i,\zeta^{b_i})\neq (a_{i+1},\zeta^{b_{i+1}})\hbox{ for } 1\leq i\leq n-1
	\end{align*}
	\end{enumerate}
\end{lemma}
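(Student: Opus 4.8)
The plan is to prove both parts by reducing everything to the theory of the symmetric-group Jucys–Murphy elements, using the fact that $G(\ell,1,n)$-modules restrict well to subgroups generated by the $s_i$ and the $\zeta_i$. For part (a), I would first observe that the $\zeta_i$ generate a finite abelian subgroup, so they act diagonalizably on any $\cc G(\ell,1,n)$-module, and they commute with all the $\phi_j$ by relation (the first displayed relation of \ref{SubsectionJMelements}). Hence it suffices to show that the $\phi_i$ act semisimply on each joint $\zeta$-eigenspace. On such an eigenspace the $\phi_i$ are simultaneously diagonalizable because the explicit action on $S^\mu$ given in \ref{SubsectionSubalgebraU} shows the $\phi_i$ act on the basis $\{v_T\}$ by the scalars $\ell\,\ct(T^{-1}(i))$; since every $\cc G(\ell,1,n)$-module is a direct sum of the $S^\mu$ (the group algebra is semisimple over $\cc$), the $\phi_i$ act diagonalizably on $M$, and being a commuting family, simultaneously so. Thus $\fu$ acts semisimply, which is part (a).

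For part (b), the key is the intertwiner construction built from relation \eqref{JMc}. Suppose $m$ is a $\fu$-weight vector with $(a_i,\zeta^{b_i})=(a_{i+1},\zeta^{b_{i+1}})$ for some $i$. First I would handle the case $\zeta^{b_i}=\zeta^{b_{i+1}}$: then $\pi_i=\sum_{0\le k\le \ell-1}\zeta_i^k\zeta_{i+1}^{-k}$ acts on $m$ by the scalar $\sum_k \zeta^{k(b_i-b_{i+1})}=\ell$ (since $b_i\equiv b_{i+1}\bmod \ell$). Now consider the element $\sigma_i = s_i + \dfrac{1}{\,\phi_{i+1}-\phi_i\,}\,\pi_i$ acting suitably, or more safely the vector $w = (\phi_{i+1}-\phi_i)\,s_i\, m + \pi_i m$. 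Using relation \eqref{JMc} in the form $s_i\phi_i - \phi_{i+1}s_i = -\pi_i$, together with $s_i\phi_j=\phi_j s_i$ for $j\ne i,i+1$ and the commutation of the $\zeta$'s with the $\phi$'s, one computes that $\phi_j w = a_j w$ for all $j$, that $s_i m$ has the same weight as $m$ with $a_i,a_{i+1}$ (and $b_i,b_{i+1}$) swapped — but these are equal by hypothesis — and that $w$ is itself a $\fu$-eigenvector of the same weight as $m$. Then one evaluates $s_i w$ and finds, via $s_i^2=1$ and $s_i\pi_i s_i = \pi_i$, a scalar relation; pairing this against the hypothesis $a_i=a_{i+1}$ and $\zeta^{b_i}=\zeta^{b_{i+1}}$ forces $w=0$ on the eigenspace, i.e. $(\phi_{i+1}-\phi_i)s_i m = -\pi_i m = -\ell m$ with $\phi_{i+1}-\phi_i$ acting by $a_{i+1}-a_i=0$ on $s_im$'s weight components — contradiction, since $0\ne -\ell m$.

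I would then dispose of the remaining possibility $a_i=a_{i+1}$ but $\zeta^{b_i}\ne\zeta^{b_{i+1}}$ by noting it cannot arise under the stated hypothesis, which already assumes the full pair agrees; so the only content is the displayed inequality itself. The main obstacle I anticipate is organizing the intertwiner bookkeeping cleanly: one must be careful that $\phi_{i+1}-\phi_i$ need not be invertible on all of $M$, so the argument should be run inside the generalized weight space of $m$ (which is genuine weight space by part (a)), where $\phi_{i+1}-\phi_i$ acts by the scalar $a_{i+1}-a_i$, and the contradiction is precisely that a nonzero scalar multiple of $m$ equals $(a_{i+1}-a_i)s_im=0$. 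An alternative, perhaps cleaner route is to reduce directly to the $\ell=1$ result of Cherednik \cite{CherednikRepsAHalg}: restrict $M$ to the subgroup $\cong S_n$ generated by the $s_i$ on a fixed joint $\zeta$-eigenspace, observe that on the subspace where $\zeta_i$ and $\zeta_{i+1}$ act by the same scalar the elements $\phi_i$ restrict (up to the known correction terms) to ordinary Jucys–Murphy elements, and invoke the classical fact that consecutive JM-eigenvalues on a simple $S_n$-module are never equal. I would present the self-contained intertwiner argument as the main proof and mention the reduction to $\ell=1$ as a remark.
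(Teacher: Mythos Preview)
Your plan is correct, but it is more involved than necessary, and the paper's own proof is considerably shorter on both counts.

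For part (a) the paper does not use the explicit formulas $\phi_i v_T=\ell\,\ct(T^{-1}(i))v_T$ at all. Instead it fixes a $W$-invariant positive definite hermitian form on $M$ and observes that each $\phi_i$ is self-adjoint and each $\zeta_i$ is unitary with respect to it; since these operators commute, they are simultaneously diagonalizable. Your route via the decomposition of $M$ into irreducibles $S^\mu$ and the explicit eigenbasis $\{v_T\}$ is also valid, but it leans on the content formulas from \cite{IrrG} as an input, whereas the hermitian-form argument is self-contained and does not presuppose those formulas.

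For part (b) your final contradiction is exactly the one the paper uses, but the paper reaches it in two lines without any intertwiner machinery. Assuming $(a_i,\zeta^{b_i})=(a_{i+1},\zeta^{b_{i+1}})$, one simply computes from relation \eqref{JMc} that
\[
\phi_i s_i m = s_i\phi_{i+1}m-\pi_i m = a_{i+1}s_i m-\ell m,
\]
so $(\phi_i-a_i)s_i m=-\ell m\neq 0$ while $(\phi_i-a_i)^2 s_i m=0$. Thus $s_i m$ is a generalized $\phi_i$-eigenvector that is not an eigenvector, contradicting part (a). Your detour through the auxiliary vector $w=(\phi_{i+1}-\phi_i)s_i m+\pi_i m$ and the operator $\sigma_i$ is unnecessary here (and indeed $\sigma_i$ is ill-defined precisely in the degenerate case $a_i=a_{i+1}$ you are analyzing). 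Also note that your intermediate assertion ``$s_i m$ has the same weight as $m$'' is not literally true; $s_i m$ only lies in the \emph{generalized} weight space, and the whole point is that it fails to be a genuine weight vector --- you correct this later, but the exposition would be cleaner if you went straight to the two-line computation above. The alternative reduction to Cherednik's $\ell=1$ result is not needed.
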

\begin{proof}
Let $M$ be a $\cc G(\ell,1,n)$-module, note that $\phi_i$ is a self adjoint operator and $\zeta_i$ is a unitary operator with respect to any $W$-invariant positive definite hermitian form on $M$. Then $\fu$ is a commutative algebra, which proves (a).\\

For (b) suppose that $(a_i,\zeta^{b_i})=(a_{i+1},\zeta^{b_{i+1}})$ for some $1\leq i\leq n-1$, then by the relation (\ref{JMc}) we have \begin{align*}
		\phi_is_i\cdot m = (s_i\phi_{i+1}-\pi_i)\cdot m = (s_ia_{i+1}-\pi)m=a_{i+1}s_im-\ell m
	\end{align*}
	hence $(\phi_i-a_i)s_im=-\ell m\neq 0$ while $(\phi_i-a_i)^2s_i m=-(\phi_i-a_i)\cdot \ell m=0$ so $si_i m$ is a generalized eigenvector, which is not an eigenvector for $\phi_i$ and hence contradicting (a).
\end{proof}

\subsection{The \emph{degenerate affine Hecke algebra} of type $G(\ell,1,n)$}\label{SubsectionHln} In the following section we define the \emph{degenerate affine Hecke algebra} which was introduced by A. Ram and V. Shepler in \cite{RamShepler} as the graded Hecke algebra for $G(\ell,1,n)$, it is also called \textit{generalized graded Hecke algebra} by C. Dez\'el\'ee in \cite{DezeleeHeckeAlg}. 

The \emph{degenerate affine Hecke algebra of type $G(\ell,1,n)$} is the algebra $H_{\ell,n}$ generated by $\cc[u_1,\ldots,u_n]$ and the group $G(\ell,1,n)$ subject to the relations
	\begin{align}
	\zeta_i u_j & = u_j\zeta_i,\quad \mbox{for all } i,j \label{RelHlnzetaiuj}\\
		s_iu_j & = u_js_i,\quad\mbox{if } j\neq i,i+1 \label{RelHlnsiuj} \\
		s_i u_i & = u_{i+1}s_i-\pi_i \label{RelHlnsiui}
	\end{align}
where $$\pi_i=\sum_{k=0}^{\ell-1}\zeta_i^k\zeta_{i+1}^{-k}.$$

Since $H_{\ell,n}$ can be injected onto the rational Cherednik algebra of type $G(\ell,1,n)$ the next two important results are consequences of the well-known PBW theorem for the rational Cherednik algebra (see \cite{EtingofGinzburg}). For detailed proof of this two statements we refer to \cite{DezeleeHeckeAlg}, \cite{MyThesis}.

\begin{theorem}[PBW]\label{DAHA_PBW} The set $\{u_{i_1}u_{i_2}\cdots u_{i_p}w|1\leq i_1\leq\cdots\leq i_p\leq n, w\in W\}$ is a $\cc$-basis for $H_{\ell,n}$.
\end{theorem}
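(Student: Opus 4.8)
The plan is to prove the two halves of the statement separately: that the proposed set $\mathcal{B}=\{u_{i_1}\cdots u_{i_p}w : 1\le i_1\le\cdots\le i_p\le n,\ w\in W\}$ spans $H_{\ell,n}$, and that it is linearly independent. Spanning is the elementary half. Every element of $H_{\ell,n}$ is a $\cc$-linear combination of words in the generators $u_1,\dots,u_n$ and the group elements, so it suffices to rewrite an arbitrary such word in the normal form of $\mathcal{B}$. First I would push all group-algebra factors to the right using \eqref{RelHlnzetaiuj}, \eqref{RelHlnsiuj}, \eqref{RelHlnsiui}: moving $\zeta_i$, or an $s_i$ with $j\ne i,i+1$, past a $u_j$ is a clean commutation, while $s_iu_i=u_{i+1}s_i-\pi_i$ (together with its companion $s_iu_{i+1}=u_is_i+\pi_i$, which follows from it because $s_i\pi_is_i=\pi_i$) replaces a $u$-degree-$p$ word by a $u$-degree-$p$ word plus a word of strictly smaller $u$-degree. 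An induction on the total degree in the $u_i$ then reduces every word to the polynomial part times a single $w\in W$; since the $u_i$ commute, the polynomial factor can be sorted into nondecreasing order, landing in $\mathcal{B}$. The base case (degree $0$) is just $\cc W=\operatorname{span}\{w\}$.

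For linear independence I would follow the route indicated above and realize $H_{\ell,n}$ inside the rational Cherednik algebra $\mathcal{H}_c$ of type $G(\ell,1,n)$, whose PBW theorem \cite{EtingofGinzburg} furnishes the $\cc$-basis $\Ch\otimes\cc W\otimes\Chh$. Let $x_1,\dots,x_n\in\h^*$ and $y_1,\dots,y_n\in\h$ be dual bases, and let $z_1,\dots,z_n\in\mathcal{H}_c$ be the associated Cherednik operators, which have the shape $z_i=x_iy_i+(\text{terms of lower bidegree lying in }\cc W\cdot\Chh)$. I would define $\Phi\colon H_{\ell,n}\to\mathcal{H}_c$ on generators by $u_i\mapsto z_i$, $\zeta_i\mapsto\zeta_i$, $s_i\mapsto s_i$, and verify that $\Phi$ respects \eqref{RelHlnzetaiuj}--\eqref{RelHlnsiui} together with the commutativity $u_iu_j=u_ju_i$, so that $\Phi$ is a well-defined algebra homomorphism.

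It then remains to see that $\Phi(\mathcal{B})$ is linearly independent, which simultaneously forces $\mathcal{B}$ to be independent (hence a basis, combined with the spanning half) and $\Phi$ to be injective. Here I would equip $\mathcal{H}_c$ with the bifiltration coming from the $(\Ch,\Chh)$-bidegree, whose associated graded is the commutative algebra $\cc[\h\oplus\h^*]\otimes\cc W$. The leading symbol of $z_i$ is $x_iy_i$, so for a multiset with multiplicities $m=(m_1,\dots,m_n)$ the product $z_{i_1}\cdots z_{i_p}w$ has leading symbol $x^m y^m w$, which in PBW normal form is $x^m\,w\,(w^{-1}y^m)$. Since $w$ is recovered as the group component and then $m$ from the $\Ch$-component, distinct pairs (multiset,\ $w$) yield distinct basis elements of the associated graded; their leading symbols are therefore linearly independent, and so is $\Phi(\mathcal{B})$.

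The main obstacle is the verification that $\Phi$ is well defined: beyond the relations that are immediate from the embedding $W\hookrightarrow\mathcal{H}_c$ and the bidegree structure (namely the $W$-relations and $\zeta_iu_j=u_j\zeta_i$, $s_iu_j=u_js_i$ for $j\ne i,i+1$), one must check that the $z_i$ pairwise commute and that $s_iz_i=z_{i+1}s_i-\pi_i$ holds in $\mathcal{H}_c$. These last two are precisely the computations encoding the degenerate-affine structure of the Cherednik operators, and they are where the defining commutation relations of $\mathcal{H}_c$ and the explicit form of $\pi_i$ must be used. A self-contained alternative that avoids $\mathcal{H}_c$ entirely would be to run Bergman's Diamond Lemma on the rewriting system given by \eqref{RelHlnzetaiuj}--\eqref{RelHlnsiui} together with the defining relations of $W$, checking that every overlap ambiguity resolves (most delicately the one from the braid relation $s_is_{i+1}s_i=s_{i+1}s_is_{i+1}$ acting on a $u_j$, and the interaction of $\zeta_i^\ell=1$ with $\zeta_iu_j=u_j\zeta_i$); confluence would then exhibit $\mathcal{B}$ as exactly the set of irreducible monomials.
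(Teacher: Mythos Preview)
Your proposal is correct and follows the same route the paper indicates: the paper does not actually prove this theorem but only remarks that it follows by injecting $H_{\ell,n}$ into the rational Cherednik algebra of type $G(\ell,1,n)$ and invoking the PBW theorem of \cite{EtingofGinzburg}, referring to \cite{DezeleeHeckeAlg} and the author's thesis for details. Your write-up supplies precisely those details---the straightening argument for spanning and the leading-symbol argument in the associated graded for independence---so there is nothing substantively different to compare; you have fleshed out what the paper leaves to the references, and you correctly identify the verification that the Cherednik elements $z_i$ satisfy $s_iz_i=z_{i+1}s_i-\pi_i$ and pairwise commute as the point where real work is required.
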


\begin{proposition}
	The center of $H_{\ell,n}$ is $\cc[u_1,u_2,\ldots,u_n]^{S_n}\otimes \cc[\zeta_1,\zeta_2,\ldots,\zeta_n]^{S_n}$.
\end{proposition}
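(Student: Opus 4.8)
\emph{Proof idea.} The plan is to prove the two inclusions separately, using the PBW basis of Theorem~\ref{DAHA_PBW} throughout; write $W = G(\ell,1,n)$ and let $\cc[u,\zeta] := \cc[u_1,\ldots,u_n]\otimes\cc[\zeta_1,\ldots,\zeta_n]$ be the commutative subalgebra generated by the $u_i$ and $\zeta_i$. The following observation feeds both directions. Iterating \eqref{RelHlnsiui} — using that $\pi_i$ commutes with every $u_j$ and, being symmetric in $\zeta_i,\zeta_{i+1}$, also with $s_i$ — gives for $f\in\cc[u_1,\ldots,u_n]$ the ``divided difference'' identity $s_i f = (s_i\cdot f)\,s_i - \pi_i\,\partial_i f$, where $s_i\cdot f$ interchanges $u_i$ and $u_{i+1}$ and $\partial_i f = (f - s_i\cdot f)/(u_i - u_{i+1})$. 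Since the $\zeta_j$ commute with the $u_j$ and $s_i\zeta^\beta = \zeta^{s_i\beta}s_i$, this extends to $f = \sum_\beta f_\beta(u)\zeta^\beta\in\cc[u,\zeta]$ as $s_i f = \sigma_i(f)\,s_i - \pi_i D_i(f)$, where $\sigma_i$ is the simultaneous interchange of $(u_i,\zeta_i)$ with $(u_{i+1},\zeta_{i+1})$ and $D_i(f) = \sum_\beta(\partial_i f_\beta)\zeta^\beta$. Moreover $\pi_i$ absorbs $\zeta_i\zeta_{i+1}^{-1}$, so $\pi_i(\zeta^\beta - \zeta^{s_i\beta}) = 0$ for every $\beta$, and hence $\pi_i D_i(f) = 0$ whenever $\sigma_i(f) = f$. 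The inclusion $\supseteq$ now follows at once: any element of $\cc[u_1,\ldots,u_n]^{S_n}\otimes\cc[\zeta_1,\ldots,\zeta_n]^{S_n}$ commutes with all $u_j$ and $\zeta_i$ by \eqref{RelHlnzetaiuj}, is fixed by each $\sigma_i$, and therefore (by the identity together with the vanishing of $\pi_i D_i$) commutes with each $s_i$, so it is central.

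For the inclusion $\subseteq$ I would argue in two steps. \emph{Step 1: $Z(H_{\ell,n})\subseteq\cc[u,\zeta]$.} Filter $H_{\ell,n}$ by $\deg u_i = 1$ and $\deg w = 0$ for $w\in W$; by Theorem~\ref{DAHA_PBW} the correction term $\pi_i$ in \eqref{RelHlnsiui} drops in the associated graded, so $\mathrm{gr}\,H_{\ell,n}\cong\cc[u_1,\ldots,u_n]\rtimes W$ with $W$ acting through its quotient $S_n$, so that $\ker(W\to S_n) = \bfmu_\ell^n$ acts trivially. A standard crossed-product computation identifies the centralizer of $\cc[u_1,\ldots,u_n]$ in $\cc[u_1,\ldots,u_n]\rtimes W$ with $\cc[u_1,\ldots,u_n]\rtimes\bfmu_\ell^n = \cc[u,\zeta]$; since $\cc[u,\zeta]$ already centralizes $\cc[u_1,\ldots,u_n]$ inside $H_{\ell,n}$, comparing associated gradeds shows the centralizer of $\cc[u_1,\ldots,u_n]$ in $H_{\ell,n}$ is exactly $\cc[u,\zeta]$, and in particular $Z(H_{\ell,n})\subseteq\cc[u,\zeta]$. \emph{Step 2.} Take $z = \sum_\beta f_\beta(u)\zeta^\beta\in\cc[u,\zeta]$ central; it automatically commutes with the $\zeta_j$, so centrality is equivalent to $s_i z = z s_i$ for all $i$, i.e.\ to $(\sigma_i(z) - z)\,s_i = \pi_i D_i(z)$ by the extended identity. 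By the PBW basis the left-hand side lies in the span of $\{\cc[u_1,\ldots,u_n]\,w\}$ over $w\in W$ whose $S_n$-component is $s_i$, while the right-hand side lies in the span over $w\in\bfmu_\ell^n$; these subspaces intersect only in $0$, so both sides vanish. Hence $\sigma_i(z) = z$ for all $i$ — that is, $z$ is invariant under the simultaneous permutation of the pairs $(u_1,\zeta_1),\ldots,(u_n,\zeta_n)$ — which is the asserted description of $Z(H_{\ell,n})$ (the residual condition $\pi_i D_i(z) = 0$ holding automatically by the preliminary observation).

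The step I expect to be the real obstacle is Step~1: showing $Z(H_{\ell,n})\subseteq\cc[u,\zeta]$. This is where the PBW theorem is genuinely used and where the correction terms $\pi_i$ — precisely the feature distinguishing this computation from the one for the group algebra $\cc W$ and from the classical degenerate affine Hecke algebra ($\ell = 1$) — must be tracked carefully; everything afterwards is elementary bookkeeping with the PBW basis and the invariant theory of $S_n$.
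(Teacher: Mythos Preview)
The paper does not actually prove this proposition: it only cites \cite{DezeleeHeckeAlg} and \cite{MyThesis} and says it follows from the PBW theorem for the rational Cherednik algebra. So your proposal is not a variant of the paper's argument but a genuine, self-contained replacement, and the overall strategy --- reduce to $\cc[u,\zeta]$ via a filtration/associated-graded argument, then use a divided-difference identity and PBW to read off the $S_n$-symmetry --- is sound and standard.

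There is, however, a real gap in your final identification. What your Step~2 actually establishes is that a central $z\in\cc[u,\zeta]$ satisfies $\sigma_i(z)=z$ for every $i$, i.e.\ that
\[
Z(H_{\ell,n})=\bigl(\cc[u_1,\ldots,u_n]\otimes\cc[\zeta_1,\ldots,\zeta_n]\bigr)^{S_n}
\]
for the \emph{diagonal} $S_n$-action permuting the pairs $(u_i,\zeta_i)$. You then assert that this ``is the asserted description of $Z(H_{\ell,n})$'', namely $\cc[u_1,\ldots,u_n]^{S_n}\otimes\cc[\zeta_1,\ldots,\zeta_n]^{S_n}$. But for $\ell>1$ and $n\geq 2$ the diagonal invariants are strictly larger than the tensor product of invariants. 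Concretely, set $z=\sum_{i=1}^n u_i\zeta_i$. A direct check with \eqref{RelHlnsiui} gives
\[
s_i\,(u_i\zeta_i+u_{i+1}\zeta_{i+1})-(u_i\zeta_i+u_{i+1}\zeta_{i+1})\,s_i=\pi_i(\zeta_{i+1}-\zeta_i)=0,
\]
since $\pi_i\zeta_i=\pi_i\zeta_{i+1}$ (your own ``$\pi_i$ absorbs $\zeta_i\zeta_{i+1}^{-1}$'' observation). Hence $z$ is central, yet $z\notin\cc[u_1,\ldots,u_n]^{S_n}\otimes\cc[\zeta_1,\ldots,\zeta_n]^{S_n}$: the degree-$(1,1)$ part of that tensor product is spanned by $(\sum_i u_i)(\sum_j\zeta_j)$ alone.

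So your argument is correct, but it proves the diagonal-invariant description, not the one written in the paper; the proposition as literally stated is too small. You should either flag this as a misstatement of the target (the intended assertion is presumably the diagonal invariants, which is what appears in the references the paper cites) or, if you insist on proving the statement as written, you would need an additional argument---and the example above shows no such argument can exist.
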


Note that from Theorem \ref{DAHA_PBW} we also have that $H_{\ell,n}$ is isomorphic to $\cc[u_1,u_2,\ldots,u_n]\otimes\cc G(\ell,1,n)$ as vector spaces.

\subsection{Intertwining operators}\label{SubsectionIntertwiningOp}

For $1\leq i\leq n$ define the intertwining operators $\tau_i$ by \begin{equation}\tau_i=s_i+\dfrac{c_0}{u_i-u_{i+1}}\pi_i.\end{equation}\label{taui}
Note that the operator $\tau_i$ is well-defined on $H_{\ell,n}$-modules since $u_i-u_{i+1}$ is invertible on the image of $\pi_i$.

The following proposition states some needed properties of the intertwining operators of $H_{\ell,n}$. As before this proof takes advantage of the injection of $H_{\ell,n}$ onto the rational Cherednik algebra nevertheless in this case we are able to give a sketch a proof since the results involved are not so deep.

\begin{proposition}\label{Properties Intertwiners}
	Let $M$ be a $H_{\ell,n}$-module and $m\in M$ such that $$wt(m)=(a_1,\ldots,a_n,\zeta^{b_1},\ldots,\zeta^{b_{n}}).$$
	\begin{enumerate}\itemsep.1in
		\item $wt(\tau_i m)=s_i(wt(m))$ where $S_n$ acts on the set of $2n$-tuples by simultaneously permuting the subindices of $a_i$ and $b_i$.\label{Prop Intertwiner Op a}
		\item $\tau_i^2=\dfrac{(u_i-u_{i+1}-\pi_i)(u_i-u_{i+1}+\pi_i)}{(u_i-u_{i+1})^2}$\label{Prop Intertwiner Op b}
		\item $\tau_i\tau_{i+1}\tau_i=\tau_{i+1}\tau_{i}\tau_{i+1}$\label{Prop Intertwiner Op c}
	\end{enumerate}
\end{proposition}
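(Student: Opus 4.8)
The strategy is to verify each of the three items by direct computation inside $H_{\ell,n}$, using the defining relations (\ref{RelHlnzetaiuj})--(\ref{RelHlnsiui}) together with the key fact that $u_i - u_{i+1}$ acts invertibly on the image of $\pi_i$ (so that $\tau_i$ really is defined on any $H_{\ell,n}$-module), and then transferring statements about $H_{\ell,n}$ to statements about $M$ by letting the elements act on $m$. For (\ref{Prop Intertwiner Op a}), the plan is to compute $u_j \tau_i m$ for each $j$. For $j \neq i, i+1$ the relations (\ref{RelHlnsiuj}) and the fact that $u_j$ commutes with $\pi_i$ give $u_j\tau_i = \tau_i u_j$ at once, so the $a_j$ eigenvalue is unchanged. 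For $j \in \{i, i+1\}$ one uses (\ref{RelHlnsiui}) and its mirror image $s_i u_{i+1} = u_i s_i + \pi_i$ to push $u_i$ and $u_{i+1}$ past $\tau_i$; the cross terms involving $\pi_i/(u_i-u_{i+1})$ must be collected carefully, but they recombine (using $\pi_i^2$ and that $\pi_i$ commutes with $u_i + u_{i+1}$ and with $u_i - u_{i+1}$ up to the correction term) to yield $u_i\tau_i m = a_{i+1}\tau_i m$ and $u_{i+1}\tau_i m = a_i \tau_i m$. The statement for the $\zeta$'s is immediate from (\ref{RelHlnzetaiuj}) and $\zeta_i\pi_j=\pi_j\zeta_i$ together with $s_i$ swapping $\zeta_i,\zeta_{i+1}$: we get $\zeta_j\tau_i m = \tau_i \zeta_{s_i(j)} m = \zeta^{b_{s_i(j)}}\tau_i m$.

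For (\ref{Prop Intertwiner Op b}), I would expand $\tau_i^2 = \big(s_i + \tfrac{c_0}{u_i-u_{i+1}}\pi_i\big)^2$. The term $s_i \tfrac{c_0}{u_i-u_{i+1}}\pi_i$ is rewritten by moving $s_i$ to the right: since $s_i$ sends $u_i - u_{i+1}$ to $-(u_i-u_{i+1})$ and $s_i\pi_i = \pi_i s_i$ (both $\zeta_i^k\zeta_{i+1}^{-k}$ and its $s_i$-conjugate $\zeta_{i+1}^k\zeta_i^{-k}$ sum to $\pi_i$), one gets $s_i\tfrac{c_0}{u_i-u_{i+1}}\pi_i = -\tfrac{c_0}{u_i-u_{i+1}}\pi_i s_i$, so the two mixed terms cancel. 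One is left with $s_i^2 + \tfrac{c_0^2}{(u_i-u_{i+1})^2}\pi_i^2 = 1 + \tfrac{c_0^2}{(u_i-u_{i+1})^2}\pi_i^2$, and the identity $\pi_i^2 = \ell\,\pi_i$ (a standard computation with the $\ell$th roots of unity) turns this into $\tfrac{(u_i-u_{i+1})^2 + c_0^2\ell\,\pi_i}{(u_i-u_{i+1})^2}$, which matches the claimed $\tfrac{(u_i-u_{i+1}-\pi_i)(u_i-u_{i+1}+\pi_i)}{(u_i-u_{i+1})^2}$ after expanding the numerator and using $\pi_i^2=\ell\pi_i$ again (with the appropriate normalization of $c_0$); here I would simply note the constant is fixed by this requirement, or cite the rational Cherednik algebra computation in \cite{EtingofGinzburg}.

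For (\ref{Prop Intertwiner Op c}), the braid relation, the cleanest route is to reduce it to the braid relation $s_i s_{i+1} s_i = s_{i+1} s_i s_{i+1}$ in $S_n$ plus bookkeeping of the rational correction terms, exactly as in the $\ell=1$ case treated by Cherednik \cite{CherednikRepsAHalg}; alternatively, invoke the injection $H_{\ell,n}\hookrightarrow$ (rational Cherednik algebra of $G(\ell,1,n)$), under which $\tau_i$ is the restriction of the corresponding intertwiner there, and the braid relation is known (see \cite{EtingofGinzburg}, \cite{GriffethUnitary}-style arguments). I expect the main obstacle to be item (\ref{Prop Intertwiner Op c}): a bare-hands verification requires expanding a product of three two-term operators with denominators $u_i-u_{i+1}$, $u_{i+1}-u_{i+2}$, $u_i-u_{i+2}$ and showing eight terms on each side agree, which is technical because the $\pi$'s do not commute with the $u$'s cleanly; leaning on the Cherednik algebra embedding to import the identity, as the paper already proposes for the sketch, is the efficient choice, with the content-shift relations (\ref{RelHlnsiui}) doing the essential work in the self-contained parts (\ref{Prop Intertwiner Op a}) and (\ref{Prop Intertwiner Op b}).
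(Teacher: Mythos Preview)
Your overall strategy is close to the paper's: both treat (\ref{Prop Intertwiner Op a}) by pushing the $u_j$ and $\zeta_j$ past $\tau_i$ using the defining relations, and both handle (\ref{Prop Intertwiner Op c}) by reducing to the braid relation for the $s_i$'s (the paper does the long direct calculation, you suggest importing it via the rational Cherednik algebra embedding---either is fine). The paper's only methodological difference is that it systematically clears denominators first, working with $(u_i-u_{i+1})\tau_i=(u_i-u_{i+1})s_i+\pi_i$ and computing $\bigl((u_i-u_{i+1})\tau_i\bigr)^2$ rather than $\tau_i^2$ directly.

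There is, however, a genuine error in your argument for (\ref{Prop Intertwiner Op b}). You claim that ``$s_i$ sends $u_i-u_{i+1}$ to $-(u_i-u_{i+1})$'' and hence that the two mixed terms in the expansion of $\tau_i^2$ cancel. This is false in $H_{\ell,n}$: from (\ref{RelHlnsiui}) and its companion $s_iu_{i+1}=u_is_i+\pi_i$ one gets
\[
s_i(u_i-u_{i+1})=-(u_i-u_{i+1})s_i-2\pi_i,
\]
so there is an extra $-2\pi_i$ correction. Writing $\Delta=u_i-u_{i+1}$ and inverting (on the image of $\pi_i$, where $\Delta$ is invertible and commutes with $\pi_i$) gives
\[
s_i\Delta^{-1}+\Delta^{-1}s_i=-2\,\Delta^{-1}\pi_i\,\Delta^{-1}=-2\,\Delta^{-2}\pi_i,
\]
so the cross terms contribute $-2c_0\ell\,\Delta^{-2}\pi_i$, not zero. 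Combined with the square term $c_0^2\ell\,\Delta^{-2}\pi_i$ this yields $\tau_i^2=1+(c_0^2-2c_0)\ell\,\Delta^{-2}\pi_i$, which with the normalization $c_0=1$ gives $1-\ell\,\Delta^{-2}\pi_i=\dfrac{\Delta^2-\pi_i^2}{\Delta^2}$, matching the statement. Your version, with the cross terms dropped, would produce $1+\ell\,\Delta^{-2}\pi_i$, the wrong sign. This is exactly why the paper clears denominators before squaring: working with $(u_i-u_{i+1})\tau_i$ keeps everything polynomial and the $\pi_i$ correction is tracked automatically.
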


\begin{proof}[Sketch of proof]
To prove part \eqref{Prop Intertwiner Op a} we use \begin{equation}\label{u_itau_i}(u_i-u_{i+1})\tau_i=(u_i-u_{i+1})s_i+\pi_i\end{equation} (which is an element of the rational Cherednik algebra attached to $G(\ell,1,n)$) instead of \eqref{taui} and using the relations between $\tau_i$ and $u_i$ in this algebra we check by straightforward calculation that $\tau_im$ is an eigenvector with eigenvalue
\begin{align}\label{s_itau}
s_i((a_1,\ldots,a_n,\zeta^{b_1},\ldots,\zeta^{b_n})) & =(a_1,\ldots,a_{i+1},a_i,\ldots,a_n,\zeta^{b_1},\ldots,\zeta^{b_{i+1}},\zeta^{b_i},\ldots,\zeta^{b_n})
\end{align}

For \eqref{Prop Intertwiner Op b} we compute  \begin{equation}
	((u_i-u_{i+1})\tau_i)^2 = (u_i-u_{i+1})\tau_i(u_i-u_{i+1})\tau_i
\end{equation} 
then by \eqref{u_itau_i} and forward calculations we obtain the desired formula for $\tau_i^2$.

And part \eqref{Prop Intertwiner Op c} follows by using that $u_iu_{i+1}u_i=u_{i+1}u_iu_{i+1}$ by (a long) straightforward calculation.
\end{proof}

\subsection{$H_{\ell,n}$-modules via branching for $G(\ell,1,n)$}\label{SubsectionHlnMods}
Let $m\in\zz_{>0}$. Then for all $1\leq i\leq n$ and $1\leq j\leq n-1$ there exists a map $H_{\ell,n} \to\cc G(\ell,1,m+n)$ given by
\begin{align*}
	u_i & \mapsto \phi_{m+i} \\
	s_j & \mapsto s_{m+j} \\
	\zeta_i & \mapsto \zeta_{m+i}
\end{align*}

The image of this map is contained in the centralizer $C_{G(\ell,1,m+n)}G(\ell,1,m)$ so that $H_{\ell,n}$ acts on the module $$S^{\lambda\setminus\mu}=\Hom_{\cc G(\ell,1,m)}(S^\mu,\Res_m^{m+n}(S^\lambda))$$ for all pairs $\lambda,\mu$ of $\ell$-partitions, where $\lambda$ is an $\ell$-partition of $m+n$ and $\mu$ is an $\ell$-partition of $m$. It follows from Young's orthogonal form that $S^{\lambda\setminus\mu}=0$ unless $\mu\subseteq \lambda$ in which case $S^{\lambda\setminus\mu}$ has a basis indexed by the set of standard Young tableaux on the skew diagram $\lambda\setminus\mu$.

Given $T\in\syt(\mu)$ and $U\in\syt(\lambda\setminus\mu)$ we define $T\cup U\in\syt(\lambda)$ by 
$$T\cup U(b)=\begin{cases} T(b), & \hbox{if } b\in\mu \\
U(b)+m, & \hbox{if } b\in\lambda\setminus \mu
\end{cases}$$
Then $$S^{\lambda\setminus\mu}=\Hom_{\cc G(\ell,1,m)}(S^\mu,\Res_m^{m+n}(S^\lambda))$$ and we define $\psi_U\in S^{\lambda\setminus\mu}$ by the formula $$\psi_U(v_T)=v_{T\cup U}$$ for $v_T$ an element of the basis of $S^\mu$. Hence these $\psi_u$ are a basis of $S^{\lambda\setminus\mu}$, i.e. \begin{equation}\label{BasisHlnMod}
S^{\lambda\setminus\mu}=\cc\{\psi_u\,|\,U\in\syt(\lambda\setminus\mu)\} 	
 \end{equation}

It is easy to check that these $H_{\ell,n}$-modules are $\fu$-diagonalizable, therefore its irreducibility is proved by the main theorem. 

Since we aim to classify all the irreducible $\fu$-diagonalizable $H_{\ell,n}$-modules, given $M$ such a module and a $\fu$-eigenvector $m\in M$ we note that the weight $$wt(m)=(a_1,\ldots,a_n,\zeta^{b_1},\ldots,\zeta_{b_n})$$ could have coordinates  which are complex numbers instead of integers of even real ones, since $\phi_im=a_im$ and $\zeta_im=\zeta^{b_i}m$ for $1\leq i\leq n$. Then our next step in the classification is to generalize the definition of the module $S^{\lambda\setminus\mu}$ to $\ell$-skew diagrams with non-integer coordinates.

\subsection{Skew shapes}\label{SubsectionSkewShapes}
A \emph{skew shape} is a finite subset $D\subseteq\cc^2$ such that whenever $(x,y)\in D$ and $(x+k,y+l)\in D$ for nonnegative integers $k$ and $l$, then $(x+k',y+l')\in D$ for all integers $0\leq l'\leq l$ and $0\leq k'\leq k$. A skew shape $D$ is called \emph{integral} if $D\subseteq \zz_{>0}^2$. Each integral skew shape is the difference $D=\alpha\setminus\beta$ of two partitions, which are not uniquely determined by $D$. Boxes $b=(x,y)$ and $b'=(x',y')$ are called \emph{adjacent} if either
\begin{enumerate}
	\item $x=x'$ and $y=y'\pm 1$, or
	\item $y=y'$ and $x=x'\pm 1$.
\end{enumerate}

Adjacency in boxes of a skew shape is a equivalence relation, and the equivalent classes of a skew shape $D$ are called \emph{connected components} of $D$, and $D$ is \emph{connected} if it has only one connected component.

Like for $\ell$-partitions we have that an \emph{$\ell$-skew shape} is a sequence $D=(D^0,D^1,\ldots D^{\ell-1})$ of $\ell$ skew shapes, some of which may be empty. And for a box of $D$ in the component $D^j$, then we write $\ct(b)=x-y$ and $\beta(b)=j$ for the \emph{content} and \emph{coordinate} of $b$, respectively. Following the analogy with $\ell$-partitions, a standard Young tableau on $D$ is defined as a bijection $$T:\hbox{\{boxes of }D\}\to \{1,2,\ldots,n\}$$ which is left to right and top to bottom increasing in each coordinate $D^i$ of $D$ and in each connected component.

In this setup we also think of the points of a diagram as boxes, thus when a diagram $D\subseteq \zz^2$ we are in the case presented in (\ref{SubsectionHlnMods}).

\subsection{Automorphisms of $H_{\ell,n}$}\label{SubsectionAutomorphisms}

There are two types of automorphisms of $H_{\ell,n}$ that we will use to define the generalized version of the modules $S^{\lambda\setminus\mu}$.

\begin{proposition}
For each $\kappa\in\cc$ there exists unique automorphisms $t_\kappa$ and $\rho$ of $H_{\ell,n}$ given by
 \begin{align*}
 	t_\kappa (u_i) =u_i+\kappa, \quad t_\kappa(\zeta_i) = \zeta_i, \quad t_\kappa(s_i)=s_i \\
 	\rho(u_i)=-u_{n-i+1}, \quad  \rho(\zeta_i)=\zeta_{n-i+1} \quad \rho(s_i)=s_{n-i}.
 \end{align*}
\end{proposition}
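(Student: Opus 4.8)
The plan is to exploit the presentation of $H_{\ell,n}$ itself: it is the free product $\cc[u_1,\ldots,u_n]*\cc G(\ell,1,n)$ modulo the cross relations \eqref{RelHlnzetaiuj}, \eqref{RelHlnsiuj}, \eqref{RelHlnsiui}. By the universal property of this construction, to define an algebra endomorphism of $H_{\ell,n}$ it suffices to give an algebra homomorphism out of $\cc[u_1,\ldots,u_n]$ and one out of $\cc G(\ell,1,n)$ whose images are compatible with those three relations; and since $H_{\ell,n}$ is generated by the $u_i$ together with $G(\ell,1,n)$, such a homomorphism is determined by its values on these generators, which gives uniqueness for free. Bijectivity will then be obtained by exhibiting a two-sided inverse of the same type.

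For $t_\kappa$ I would send $u_i\mapsto u_i+\kappa$ on $\cc[u_1,\ldots,u_n]$ — this extends to a ring homomorphism because the elements $u_i+\kappa$ still commute — and take the identity on $\cc G(\ell,1,n)$. Checking \eqref{RelHlnzetaiuj}--\eqref{RelHlnsiui} is immediate since $\kappa$ is a central scalar; for instance \eqref{RelHlnsiui} becomes $s_i(u_i+\kappa)=u_{i+1}s_i-\pi_i+\kappa s_i=(u_{i+1}+\kappa)s_i-\pi_i$. So $t_\kappa$ is a well-defined endomorphism, and $t_\kappa t_{-\kappa}=t_{-\kappa}t_\kappa=\mathrm{id}$, so it is an automorphism.

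For $\rho$ I would send $u_i\mapsto-u_{n-i+1}$ on $\cc[u_1,\ldots,u_n]$ (again a homomorphism, as the images commute) and take conjugation by the longest element $w_0\in S_n$, $w_0(i)=n-i+1$, on $\cc G(\ell,1,n)$; this is an automorphism of the group algebra, and $w_0 s_i w_0^{-1}=s_{n-i}$, $w_0\zeta_iw_0^{-1}=\zeta_{n-i+1}$ give the asserted values. Relation \eqref{RelHlnzetaiuj} is preserved trivially, and \eqref{RelHlnsiuj} becomes $s_{n-i}u_{n-j+1}=u_{n-j+1}s_{n-i}$, which holds because $j\ne i,i+1$ forces $n-j+1\ne n-i,n-i+1$. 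The only genuine computation is \eqref{RelHlnsiui}: conjugating $\pi_i=\sum_k\zeta_i^k\zeta_{i+1}^{-k}$ by $s_i$ and reindexing $k\mapsto-k$ shows $\pi_i$ is invariant under the swap $i\leftrightarrow i+1$, so $s_i\pi_i=\pi_is_i$ and likewise $\rho(\pi_i)=\pi_{n-i}$. Substituting $s_i\pi_is_i=\pi_i$ back into \eqref{RelHlnsiui} yields the auxiliary identity $s_iu_{i+1}=u_is_i+\pi_i$ for $1\le i\le n-1$, and applying $\rho$ to \eqref{RelHlnsiui} with $k=n-i$ reduces exactly to this identity. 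Hence $\rho$ is a well-defined endomorphism, and one checks $\rho^2=\mathrm{id}$ on the generators $u_i,s_i,\zeta_i$, so $\rho$ is an involutive automorphism.

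The only step needing care is verifying that $\rho$ respects \eqref{RelHlnsiui}; this rests entirely on the symmetry of $\pi_i$ under interchanging the indices $i$ and $i+1$, from which both $s_i\pi_i=\pi_is_i$ and the "flipped" relation $s_iu_{i+1}=u_is_i+\pi_i$ follow. Everything else — well-definedness of $t_\kappa$, the group-theoretic relations, and bijectivity via the inverses $t_{-\kappa}$ and $\rho$ — is formal.
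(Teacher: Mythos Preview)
Your proof is correct and follows essentially the same strategy as the paper: both verify that the proposed assignments respect the defining relations of $H_{\ell,n}$, with the key computation being $\rho(\pi_i)=\pi_{n-i}$ (equivalently $s_i\pi_is_i=\pi_i$) so that relation~\eqref{RelHlnsiui} is preserved. Your version is in fact slightly tidier: framing $\rho|_{\cc G(\ell,1,n)}$ as conjugation by the longest element $w_0$ disposes of all the group relations at once, and you explicitly exhibit the inverses $t_{-\kappa}$ and $\rho$ itself to establish bijectivity, a point the paper leaves implicit.
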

\begin{proof}
 	We write $A=C\la u_1,\ldots,u_n,\zeta_1,\ldots,\zeta_n,s_1\ldots,s_{n-1}\ra$ for the tensor algebra on this generators, then $H_{\ell,n}\simeq A/I$ where 
 	\begin{align*}
 		I= \langle & u_iu_j-u_ju_i,\quad \hbox{for all } 1\leq i,j\leq n \\
 		& \zeta_j\zeta_i-\zeta_i\zeta_j, \quad \hbox{for all } 1\leq i,j\leq n \\
 		& s_is_j-s_js_i,\quad \hbox{for } j\neq i,i+1 \\
 		& s_is_{i+1}s_i-s_{i+1}s_is_{i+1} \\
 		& \zeta_iu_j-u_j\zeta_i,\quad \hbox{for all } 1\leq i,j\leq n \\
 		& s_iu_j-u_js_i\quad \hbox{for } j\neq i,i+1 \\
 		& s_iu_i-u_{i+1}s_i+\pi_i\rangle 
 	\end{align*} 
 
To check $t_\kappa$ is an automorphism, it is enough to check that $t_\kappa(a)\in I$, for all $a\in I$. We will check only the las relation. Note that $t_\kappa(\pi_i)=\pi_i$ since $t_\kappa(\zeta_i)=\zeta_i$, hence 
 \begin{align*}
 	 t_\kappa(s_i)t_\kappa(u_i)-t_\kappa(u_{i+1})t_\kappa(s_i)+t_\kappa(\pi_i) & =s_i(u_i+\kappa)-(u_{i+1}+\kappa)s_i+\pi_i \\
& = s_iu_i+\kappa s_i-u_{i+1}s_i-\kappa s_i+\pi_i \\
& = s_iu_i-u_{i+1}s_i+\pi_i \in I
 \end{align*}
In the same way we did for $t_\kappa$, we will check that $$\rho: A / I \to A / I$$ is an automorphism.
\begin{itemize}
	\item For the first relation \begin{align*}
		\rho(u_iu_j-u_ju_i) & =(-u_{n-i+1})(-u_{n-j+1})-(-u_{n-j+1})(-u_{n-i+1})\\
		& = u_{n-i+1}u_{n-j+1}-u_{n-j+1}u_{n-i+1} \in I
	\end{align*}
	\item For $1\leq i,j\leq n$ the cases $\rho(\zeta_i\zeta_j-\zeta_j\zeta_i),\rho(s_is_{i+1}s_i-s_{i+1}s_is_{i+1}),\rho(\zeta_iu_j-u_j\zeta_i)\in I$ and $\rho(s_is_j-s_js_i),\rho(s_iu_j-u_js_i)\in I$ for $j\neq i,i+1$ are similar.
	\item Note that \begin{align*}
		\rho(\pi_i)=\rho\left(\sum_{k=0}^{\ell-1}\zeta_i^k\zeta_{i+1}^{-k}\right) = \sum_{k=0}^{\ell-1}\zeta_{n-i+1}^{k}\zeta_{n-i}^{-k}=\pi_{n-i}
	\end{align*} hence \begin{align*}
		\rho(s_iu_i-u_{i+1}s_i+\pi_i) & = \rho(s_i)\rho(u_i)-\rho(u_{i+1})\rho(s_{i})+\rho(\pi_i) \\
		& = s_{n-i}(-u_{n-i+1}-(-u_{n-i})s_{n-i}+\pi_{n-i} \\
		& = u_{n-i}s_{n-i}-s_{n-i}u_{n-i+1}+\pi_i \\
		& = s_{n-i}(s_{n-i}u_{n-i}-u_{n-i+1}s_{n-i}+\pi_{n-i})s_{n-i}\in I
	\end{align*}
	then $s_{n-i}u_{n-i}-u_{n-i+1}s_{n-i}+\pi_{n-i}\in I$.
\end{itemize} 
\end{proof}

Both of these automorphisms preserve the group algebra $\cc G(\ell,1,n)$ and their restrictions to $\cc G(\ell,1,n)$ are inner.

Given an $H_{\ell,n}$-module $M$ and an automorphism $a$ of $H_{\ell,n}$ we write $M^a$ for the $H_{\ell,n}$-module which is equal to $M$ as an abelian group with the $H_{\ell,n}$-action defined by $$h\cdot m=a(h)m\quad\hbox{ for }h\in H_{\ell,n}\hbox{ and } m\in M.$$

If $a=\rho$ or $a=t_\kappa$ then $M^a$ is isomorphic to $M$ as a $\cc G(\ell,1,n)$-module, since the restrictions of this automorphisms to $G(\ell,1,n)$ are inner.

\subsection{$H_{\ell,n}$-modules $S^D$}\label{SubsectionHlnModsSD} Here, using the tools and known results for $S^{\lambda\setminus\mu}$ and the new objects and definitions given in (\ref{SubsectionSkewShapes}) and (\ref{SubsectionAutomorphisms}) we proceed to generalize these modules.

Let $D\subseteq\rr^2\times\zz/\ell  $ be a skew shape with connected components $D_1,\ldots,D_k$. After diagonal slides, we may assume that each $D_i$ is such that for any $(x,y)\in D_i$ we have $y_i\in\zz$ and moreover the set of $y$-coordinates of distinct $D_i$'s are disjoint. We may choose (non-unique) $\alpha_1,\ldots,\alpha_k\in\cc$ and integral skew shapes $\lambda_1\setminus\mu_1,\ldots,\lambda_k\setminus\mu_k$ such that
\begin{equation*}
	D_i=\lambda_i\setminus\mu_i+(\alpha_i,0)
\end{equation*}
so that their union is disjoint and a skew shape, $$\lambda\setminus\mu=\coprod\lambda_i\setminus\mu_i$$ and so $\lambda_1\setminus\mu_1,\ldots,\lambda_k\setminus\mu_k$ are the connected components of their disjoint union $\lambda\setminus\mu$.
Then define
\begin{equation*}
S^D=\Ind^{H_{\ell,n}}_A\big(\otimes_i(S^{\lambda_i\setminus\mu_i})^{t_{\alpha_i}}\big)
\end{equation*}
where $A=H_{\ell,n_1}\otimes H_{\ell,n_2}\otimes\cdots\otimes H_{\ell,n_k}$ and $n_1,\ldots,n_k$ are certain nonnegative integers such that $\sum {n_i}=n$.

We obtain by this construction an $H_{\ell,n}$-module $S^D$ which is isomorphic to $S^{\lambda\setminus\mu}$ as $G(\ell,1,n)$-modules. We will see in the following section that this is indeed the generalization we were looking for and moreover is a very good one since each $\fu$-diagonalizable $H_{\ell,n}$-module can be obtained as one of this.

\begin{theorem}\label{SD irreducible}
	$S^D$ is an irreducible $\fu$-diagonalizable $H_{\ell,n}$-module
\end{theorem}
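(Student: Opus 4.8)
The plan is to establish the two claims separately: first that $S^D$ is $\fu$-diagonalizable, and then that it is irreducible. For the diagonalizability, the key observation is that $S^D$ is built by induction from tensor products of the modules $(S^{\lambda_i\setminus\mu_i})^{t_{\alpha_i}}$, and each $S^{\lambda_i\setminus\mu_i}$ already has a basis $\{\psi_U \mid U\in\syt(\lambda_i\setminus\mu_i)\}$ consisting of $\fu$-eigenvectors by the construction in (\ref{SubsectionHlnMods}), since the action of $u_j=\phi_{m+j}$ and $\zeta_j=\zeta_{m+j}$ on $\Hom_{\cc G(\ell,1,m)}(S^{\mu_i},\Res(S^{\lambda_i}))$ is diagonal in the $\psi_U$-basis by Young's orthogonal form. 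Twisting by $t_{\alpha_i}$ only shifts the $u_j$-eigenvalues by $\alpha_i$ and leaves the $\zeta_j$-eigenvalues fixed, so the twisted modules remain $\fu$-diagonalizable; the tensor product over $i$ is then $\fu$-diagonalizable for the corresponding parabolic subalgebra $A$. Inducing up to $H_{\ell,n}$, a $\cc$-basis is given (via the PBW theorem, Theorem~\ref{DAHA_PBW}) by $\{u_{j_1}\cdots u_{j_p}\otimes v\}$ for coset representatives of $S_n/(S_{n_1}\times\cdots\times S_{n_k})$ and $v$ in the eigenbasis of the tensor product; one then diagonalizes $\fu$ on this space by repeatedly applying the intertwining operators $\tau_i$ of (\ref{SubsectionIntertwiningOp}), using Proposition~\ref{Properties Intertwiners}(\ref{Prop Intertwiner Op a}) to move a given eigenvector of $A$ across coset representatives. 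One must check that the relevant $\tau_i$ are invertible on the vectors in question, i.e. that the scalar in Proposition~\ref{Properties Intertwiners}(\ref{Prop Intertwiner Op b}) is nonzero; this holds because adjacent contents in a standard tableau on a skew shape never produce $u_i-u_{i+1}=\pm\pi_i$ after the appropriate content check, which is exactly the kind of statement underlying Lemma~\ref{LemmaIntertwiners}(b). I expect the bookkeeping here to be somewhat involved but not conceptually hard.

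For irreducibility, the plan is to show that any nonzero $H_{\ell,n}$-submodule $N\subseteq S^D$ must be all of $S^D$. Since $\fu$ acts semisimply on $S^D$ (by the diagonalizability just established), $N$ contains a $\fu$-eigenvector, hence one of the eigenvectors $v_T$ indexed by a standard Young tableau $T$ on (a diagonal-slide representative of) $D$. The strategy is to show that from any single $v_T$ one can reach every other $v_{T'}$, $T'\in\syt(D)$, by applying a suitable word in the intertwining operators $\tau_i$ together with elements of $\cc G(\ell,1,n)$ and $\cc[u_1,\ldots,u_n]$. Concretely, any two standard tableaux $T,T'$ on the same skew shape are connected by a sequence of elementary moves each of which swaps the entries $i$ and $i+1$ when these lie in non-adjacent boxes (in the relevant component); by Proposition~\ref{Properties Intertwiners}(\ref{Prop Intertwiner Op a}) the operator $\tau_i$ realizes exactly this swap on weights, and by part (\ref{Prop Intertwiner Op b}) it is invertible on $v_T$ precisely because the boxes of $i$ and $i+1$ are non-adjacent, so $\tau_i v_T$ is a nonzero multiple of $v_{s_i T}$. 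Iterating, $N$ contains every $v_{T'}$, hence $N=S^D$.

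The main obstacle, I expect, is the invertibility/nonvanishing analysis for the intertwiners: one needs to know precisely, for each skew shape $D$ and each adjacent transposition that is \emph{not} a valid standard-tableau move, either that $\tau_i v_T$ still lies in the span of known eigenvectors with a controlled (possibly zero) coefficient, or that the vanishing does not obstruct connectivity. In the classical ($\ell=1$) and cyclotomic-partition cases this is handled by the explicit content formula: $\tau_i^2$ evaluated on $v_T$ equals $1-1/(\ct(i)-\ct(i+1))^2$ (up to the $\pi_i$-twist and the factor $\ell$), which is nonzero exactly when $i,i+1$ are non-adjacent, zero for "breakable" adjacencies, and one must separately rule out the genuinely forbidden configuration using Lemma~\ref{LemmaIntertwiners}(b) (adjacent boxes in the same row or column give $wt$-coordinates that would force a contradiction with diagonalizability). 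Extending this content bookkeeping to $\ell$-skew shapes with possibly non-integral contents requires care: the $\zeta_i$-eigenvalue $\zeta^{b_i}$ now enters, and one checks that $\pi_i$ acts as $\ell$ on $v_T$ exactly when $b_i\equiv b_{i+1}$ and as $0$ otherwise, so the relevant non-vanishing condition becomes "$(\ell\ct(b_i),\zeta^{b_i})\neq(\ell\ct(b_{i+1}),\zeta^{b_{i+1}})$ together with non-equality of contents when the coordinates agree", which is guaranteed by the structure of standard tableaux on $D$. Once this lemma is in place, both diagonalizability and irreducibility follow by the connectivity argument sketched above, and the construction of $S^D$ as an induced module guarantees the dimension count matches $|\syt(D)|$, closing the argument.
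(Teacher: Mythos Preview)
Your proposal is correct and follows essentially the same route as the paper: build a $\cc$-basis of the induced module from the $\psi_U$-basis of the tensor factor together with minimal-length coset representatives in $S_n/(S_{n_1}\times\cdots\times S_{n_k})$, replace each reduced word $s_{i_1}\cdots s_{i_p}$ by $\tau_{i_1}\cdots\tau_{i_p}$ to produce $\fu$-eigenvectors (the paper phrases this as an upper-triangular change of basis), and then prove irreducibility by observing that any submodule contains an eigenvector and that any two standard Young tableaux on $D$ are connected by a chain of simple transpositions through standard tableaux, so the corresponding $\tau_i$'s are invertible. One small slip: the coset representatives carry no $u$-monomials, since all $u_i$ already lie in the parabolic $A$; your basis should read $\{w\otimes v\}$ with $w$ a minimal coset representative, not $\{u_{j_1}\cdots u_{j_p}\otimes v\}$.
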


In order to prove this result first we recall some combinatorial lemmas involving skew shaped diagrams and tableaux.

Let $D$ be a skew diagram of shape $\lambda\setminus\mu$, for a box $b\in D$ with $b=(x,y)$ we define $R(b)=y$ and $C(b)=x$, i.e. $R(b)$ and $C(b)$ be the row and the column of the box $b$ in $D$, respectively.
Let $T$ be a standard Young tableau in $D$, define $I(T)$ as the set of pairs $(i,j)$ of numbers $1\leq i<j\leq n$ such that $j$ appears in $D$ in a row strictly above than $i$, i.e.
		 $$I(T):=\{(i,j)\,|\, R(T^{-1}(i))>R(T^{-1}(j)), \hbox{ for } 1\leq i<j\leq n \}$$
		 
\begin{lemma}\label{CombLemma1}
	Suppose $(i,i+1)\in I(T)$ and $s_iT\in{\rm SYT}(D)$ then $I(s_iT)=s_i(I(T)\setminus\{(i,i+1)\})$.
\end{lemma}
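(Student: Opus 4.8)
The statement concerns how the inversion-type set $I(T)$ transforms under a simple transposition $s_i$ when $(i,i+1)$ is an inversion (meaning $i+1$ sits strictly above $i$ in $D$) and the result $s_iT$ is still standard. The plan is to track, for each pair $(a,b)$ with $a<b$, whether membership in $I(T)$ is preserved or altered when we swap the labels $i$ and $i+1$. Write $c = T^{-1}(i)$ and $c' = T^{-1}(i+1)$, so by hypothesis $R(c) > R(c')$; after applying $s_i$ we have $(s_iT)^{-1}(i) = c'$ and $(s_iT)^{-1}(i+1) = c$, while $(s_iT)^{-1}(k) = T^{-1}(k)$ for all $k \neq i, i+1$.

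First I would dispose of the pair $(i,i+1)$ itself: in $T$ it lies in $I(T)$ by hypothesis, but in $s_iT$ the box carrying label $i$ is now $c'$ and the box carrying $i+1$ is now $c$, so the relevant comparison is $R(c') > R(c)$, which is false; hence $(i,i+1)\notin I(s_iT)$. This matches the right-hand side, since $s_i$ fixes $(i,i+1)$ and we have explicitly removed it. Next, for pairs $(a,b)$ with $\{a,b\}\cap\{i,i+1\}=\emptyset$, both boxes $(s_iT)^{-1}(a)=T^{-1}(a)$ and $(s_iT)^{-1}(b)=T^{-1}(b)$ are unchanged and $s_i$ fixes such a pair, so $(a,b)\in I(s_iT)\iff (a,b)\in I(T)$, consistent with the formula. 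The substantive case is when exactly one of $a,b$ equals $i$ or $i+1$: here I split into the subcases $(a,i)$ with $a<i-1$ wait — more carefully, the pairs $(a,i)$, $(a,i+1)$ with $a<i$, and $(i,b)$, $(i+1,b)$ with $b>i+1$. Take for instance a pair $(a,i)$ with $a<i$: membership in $I(T)$ depends on $R(T^{-1}(a))$ vs $R(c)$, whereas in $I(s_iT)$ the label $i$ now sits at box $c'$, so membership depends on $R(T^{-1}(a))$ vs $R(c')$. Under $s_i$, the pair $(a,i)$ maps to $(a,i+1)$, and one checks $(a,i+1)\in I(T)$ iff $R(T^{-1}(a)) > R(c')$, which is exactly the condition for $(a,i)\in I(s_iT)$. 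The analogous bookkeeping handles $(a,i+1)$, $(i,b)$, and $(i+1,b)$: in each case applying $s_i$ to the index pair precisely compensates for the swap of boxes $c\leftrightarrow c'$.

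The one point requiring a little care — and the main (mild) obstacle — is ensuring all the box comparisons are genuinely strict inequalities that flip correctly, i.e. that no degenerate case arises where $a$, $i$, $i+1$, $b$ occupy positions making two of the row-comparisons simultaneously fail. Here I would invoke standardness of both $T$ and $s_iT$: since $c$ and $c'$ both carry consecutive labels in standard tableaux on $D$, and since $R(c)>R(c')$ with $s_iT$ also standard, the boxes $c,c'$ cannot lie in the same row (indeed if they were in the same row, consecutive labels would force $c'$ immediately left of $c$, contradicting $R(c)>R(c')$), so the row values $R(c)\neq R(c')$; combined with the fact that for any other box $b''$ we never have the ambiguous coincidence forcing $R(b'')$ to sit strictly between $R(c')$ and $R(c)$ in a way that breaks the parity, the flips go through cleanly. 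Assembling the four nontrivial subcases with the two trivial ones then yields $I(s_iT) = s_i\big(I(T)\setminus\{(i,i+1)\}\big)$ as an equality of sets, since we have matched membership element by element and accounted exactly for the removed pair.
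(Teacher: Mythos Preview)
The paper does not actually prove this lemma: it is stated (along with the two neighboring combinatorial lemmas) as a recalled fact without argument. So there is no paper proof to compare against, and your task is just to give a clean proof.

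Your case analysis is correct and is the natural approach. With $c=T^{-1}(i)$, $c'=T^{-1}(i+1)$ and $R(c)>R(c')$, the verification that $(p,q)\in I(s_iT)\iff s_i(p,q)\in I(T)\setminus\{(i,i+1)\}$ goes through exactly as you describe for each of the six types of pairs. One comment: your final paragraph about ``degenerate cases'' and needing $R(c)\neq R(c')$ and standardness of $s_iT$ is unnecessary. The inequality $R(c)>R(c')$ is already given by the hypothesis $(i,i+1)\in I(T)$, and every step of the case analysis is a purely formal bookkeeping identity that holds for any bijective filling; nothing about rows sitting ``strictly between'' or about standardness of $s_iT$ is used in the equivalences. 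The hypothesis $s_iT\in\mathrm{SYT}(D)$ is there only so that $I(s_iT)$ is defined in the paper's setup. You can simply delete that last paragraph and the proof is complete.
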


\begin{lemma}\label{CombLemma2}
	If $(i,i+1)\notin I(T)$ for all $1\leq i\leq n$ then $T$ is the \emph{row reading tableau}.
\end{lemma}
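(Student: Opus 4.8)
The plan is to argue by contraposition on the structure of a standard Young tableau on the $\ell$-skew shape $D$. Assume that $(i,i+1)\notin I(T)$ for every $1\leq i\leq n-1$; I want to conclude that $T$ must be the row reading tableau, i.e. the unique standard Young tableau obtained by filling $D$ with $1,2,\ldots,n$ by reading the rows of $D$ from top to bottom, and within each row from left to right (with an agreed-upon ordering on the $\ell$ components and on rows at the same height in distinct connected components, which is exactly the linear order on boxes used to define $I(T)$). First I would reformulate the hypothesis: $(i,i+1)\notin I(T)$ means $R(T^{-1}(i))\le R(T^{-1}(i+1))$, i.e. for consecutive values, the box holding $i$ is weakly above the box holding $i+1$ in the ambient row-coordinate $R$. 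So the hypothesis for all $i$ says that the sequence of row-coordinates $R(T^{-1}(1)), R(T^{-1}(2)),\ldots, R(T^{-1}(n))$ is weakly increasing.

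The key step is then to show that a standard filling whose entries have weakly increasing $R$-coordinates must actually fill each row left-to-right before moving to the next row. I would proceed by induction on $n$ (or equivalently, by locating the box containing $1$). Since $1$ has the smallest value and the $R$-coordinates are weakly increasing, $R(T^{-1}(1))$ is minimal among all boxes of $D$; because $T$ is a standard Young tableau, within the top row(s) the entries increase left to right, so $T^{-1}(1)$ must be the leftmost box of a top row. If several connected components or components $D^j$ share this minimal row-coordinate, the tie-breaking convention in the definition of the linear order on boxes forces $T^{-1}(1)$ to be the designated first such box. Removing the box $T^{-1}(1)$ from $D$ yields a smaller $\ell$-skew shape $D'$, and the restriction of $T$ to $D'$ (relabelling $2,\ldots,n$ as $1,\ldots,n-1$) is again standard and still has weakly increasing $R$-coordinates, hence by induction is the row reading tableau of $D'$. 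It remains to check that prepending $1$ in the leftmost box of the top row of $D$ reconstructs exactly the row reading tableau of $D$: this is immediate from the definition once one verifies that the box removed is precisely the one the row reading procedure fills first, which is where the weakly-increasing-$R$ property together with the within-row increase of a standard tableau is used.

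The main obstacle, I expect, is purely bookkeeping rather than conceptual: one must be careful that ``row reading tableau'' is well-defined for an $\ell$-skew shape, which requires fixing, once and for all, the total order on boxes that the construction of $I(T)$ implicitly uses — ordering first by the $R$-coordinate, then by connected component / component index $\beta$, then by column $C$ — and checking that this is consistent with the requirement that $T$ be increasing along rows and down columns in each component and each connected component. A second small point is the case where two boxes in distinct connected components have the same $R$-coordinate: the hypothesis $(i,i+1)\notin I(T)$ only constrains the $R$-coordinates, so one needs the strict column-increase (or the component ordering) within a fixed $R$-level to pin down the order, and this is exactly what guarantees uniqueness of the row reading tableau. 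Once these conventions are in place the induction above goes through with no computation.
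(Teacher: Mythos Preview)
Your argument is correct. The key observation---that the hypothesis is equivalent to the sequence $R(T^{-1}(1)),\ldots,R(T^{-1}(n))$ being weakly increasing, and that this together with the row-increasing condition of a standard tableau forces the row reading filling---is exactly right, and your induction on $n$ (peel off the box containing $1$, which must be the leftmost box of the top nonempty row) is a clean way to make it precise.

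Two remarks comparing your write-up to the paper. First, the paper does not actually supply a proof of this lemma: it is stated as a standard combinatorial fact, and the short paragraph following it in the paper is the \emph{converse} implication (if $T$ is the row reading tableau then $I(T)=\emptyset$), not a proof of the lemma itself. So there is nothing to compare your approach against; you have simply filled in the omitted argument.

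Second, your bookkeeping worries about the $\ell$-skew case and about ties between connected components are largely unnecessary in the setting where the lemma is stated. The lemma is formulated for a skew diagram $D=\lambda\setminus\mu$ with $\lambda,\mu$ partitions; in that situation each row of $D$ is a single contiguous horizontal strip, so two boxes with the same $R$-coordinate are automatically in the same connected component and are totally ordered by their column coordinate. Hence the ``row reading tableau'' is unambiguous without any auxiliary tie-breaking convention, and the potential obstacle you flagged does not arise. Your induction then goes through verbatim: the box containing $1$ is in the minimal row, is leftmost there by standardness, and removing it yields another skew shape (it has no box above it and none to its left), so the inductive hypothesis applies.
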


Moreover note that if $T$ is the \emph{row reading tableau} in $D$ then $R(T^{-1}(i))\leq R(T^{-1}(j))$ for all $1\leq i<j\leq n-1$ then clearly $(i,j)\notin I(T)$ and $I(T)=\emptyset$.  If $\lambda$ is a row then there is only one standard Young tableau which is the row reading.

\begin{lemma}\label{CombLemma3}
	Let $D$ be a skew diagram with $n$ boxes. Given $T$ and $T'$ two standard young tableaux on $D$ then there exists a sequence of simple transpositions $s_{i_1},s_{i_2},\ldots,s_{i_p}$ such that $$s_{i_p}s_{i_{p-1}}\cdots s_{i_1}(T)=T'$$ and $s_{i_k}s_{i_{k-1}}\cdots s_{i_1}(T)\in\syt(D)$ for all $1\leq k\leq p$.
\end{lemma}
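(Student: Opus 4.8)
The plan is to induct on the statistic $|I(T)| + |I(T')|$, using Lemma \ref{CombLemma1} to decrease it and Lemma \ref{CombLemma2} as the base case. First I would observe that, since adjacency classes act independently, it suffices to treat the case where $D$ is connected and a genuine (single) skew shape; the general $\ell$-skew case follows by handling each connected component separately and concatenating the resulting sequences of transpositions (the allowed simple transpositions for one component commute with the tableau entries of the others, so intermediate tableaux stay standard).

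For the base case, suppose $|I(T)| = |I(T')| = 0$. By Lemma \ref{CombLemma2} both $T$ and $T'$ are the row reading tableau of $D$, hence $T = T'$ and the empty sequence works. For the inductive step, suppose $|I(T)| + |I(T')| > 0$; without loss of generality $I(T) \neq \emptyset$. I claim there exists $i$ with $(i,i+1) \in I(T)$ and $s_i T \in \syt(D)$. Indeed, choose $(i,j) \in I(T)$ with $j - i$ minimal; I would argue that minimality forces $j = i+1$: if $j > i+1$ then each of $i+1, \ldots, j-1$ lies weakly below $j$ and weakly above $i$ in terms of rows (else we would get a closer pair), and then a short combinatorial argument on contents/positions within the skew shape shows $T^{-1}(i)$ and $T^{-1}(i+1)$ are in distinct rows and columns, so swapping them preserves standardness — this is exactly the condition that $s_i T \in \syt(D)$. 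Granting the claim, set $\widetilde{T} = s_i T$; by Lemma \ref{CombLemma1}, $I(\widetilde T) = s_i(I(T) \setminus \{(i,i+1)\})$, so $|I(\widetilde T)| = |I(T)| - 1$. By induction there is an admissible sequence $s_{j_1}, \ldots, s_{j_q}$ carrying $\widetilde T$ to $T'$ through standard tableaux; then $s_{j_q} \cdots s_{j_1} s_i$ carries $T$ to $T'$, and all intermediate tableaux are standard (the first step because $s_i T = \widetilde T \in \syt(D)$, the rest by the inductive hypothesis).

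The main obstacle is the claim that the minimal pair $(i,j) \in I(T)$ satisfies $j = i+1$ \emph{and} that $s_i T$ is again standard — i.e., that $T^{-1}(i)$ and $T^{-1}(i+1)$ are neither in the same row nor the same column of $D$. The "same row" case is immediate from $(i,i+1) \in I(T)$ (they are in different rows by definition of $I(T)$, with $i+1$ strictly above $i$); the "same column" case is where care is needed: if $T^{-1}(i)$ sits directly below $T^{-1}(i+1)$ in a column of $D$, then every box between them in that column is also in $D$, but standardness of $T$ forces the entry immediately above $T^{-1}(i)$ to be smaller than $i$ and the entry immediately below $T^{-1}(i+1)$ to be larger than $i+1$ — there is no room for a box strictly between them, so they would be vertically adjacent, contradicting $j = i+1$ via the content argument, or else directly contradicting that $i+1$ can be placed above $i$ in a column with nothing in between while keeping the filling standard. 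Making this dichotomy airtight — that minimality of $j-i$ excludes the same-column configuration — is the only real content; everything else is bookkeeping with Lemmas \ref{CombLemma1} and \ref{CombLemma2}.
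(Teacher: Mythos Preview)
The paper does not actually supply a proof of Lemma~\ref{CombLemma3}; it is stated without argument, with Lemmas~\ref{CombLemma1} and~\ref{CombLemma2} set up as the evident ingredients. Your approach---induct on $|I(T)|+|I(T')|$, use Lemma~\ref{CombLemma2} for the base case, and use Lemma~\ref{CombLemma1} to drop the statistic by one---is exactly the argument those two lemmas are designed to feed into, and it is correct.

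Two small points where your write-up is more tangled than it needs to be. First, the deduction $j=i+1$ from minimality of $j-i$ is a clean two-case split: for any intermediate $k$ with $i<k<j$, either $R(T^{-1}(k))<R(T^{-1}(i))$, giving the closer pair $(i,k)\in I(T)$, or $R(T^{-1}(k))\geq R(T^{-1}(i))>R(T^{-1}(j))$, giving the closer pair $(k,j)\in I(T)$. Second, the ``same column'' obstacle you flag is not an obstacle at all: once $(i,i+1)\in I(T)$, the box containing $i+1$ lies in a strictly higher row than the box containing $i$, and column-standardness of $T$ (entries increase downward in each column) immediately rules out their lying in the same column---no content argument or analysis of intermediate boxes is needed. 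With these simplifications your proof goes through cleanly.
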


Note that this sequence of simple transpositions given in the preceding lemma corresponds to a sequence of invertible intertwining operators, and as a consequence of this lemma we can ``connect" every pair of standard Young tableaux of shape $D$ by a sequence of invertible intertwining operators.

\begin{proof}[Proof of theorem \ref{SD irreducible}]
Let $n_i$ be the number of boxes of the diagram $\lambda_i\setminus\mu_i$, and $n=\sum_i^k n_i$.

From (\ref{SubsectionHlnMods}) each $S^{\lambda_i\setminus\mu_i}$ has basis $\{\psi_U\,|\,U\in\syt(\lambda_i\setminus\mu_i)\}$. Since $\lambda\setminus \mu=\coprod \lambda_i\setminus\mu_i$ then the set

\begin{equation}\label{Basis for H l n1 to nk}
	\Bigl\{\psi_U\,\Bigl|\,U\in\syt(\lambda\setminus\mu)\hbox{ such that } U(b)\in\{n_{i-1}+1,\ldots,n_{i}\}\hbox{ if }b\in\lambda^i\setminus\mu^i \Bigl\}
\end{equation} is a $\cc$ basis of $\bigotimes_i S^{\lambda_i\setminus\mu_i}$, where $n_0=0$.

Note that $H_{\ell,n}$ can be understand as the free module over the subalgebra $H_{\ell,n_1}\otimes H_{\ell,n_2}\otimes\cdots\otimes H_{\ell,n_k}$ which by PBW-theorem for $H_{\ell,n}$ has basis indexed by the set
\begin{equation}\label{Basis2}
\Bigl\{w\,\Bigl|w\in S_n/S_{n_1}\times S_{n_2}\times \cdots \times S_{n_k}\Bigl\}
\end{equation}

Then the $H_{\ell,n}$-module
$$S^D=\Ind_{H_{\ell,n_1}\otimes H_{\ell,n_2}\otimes\cdots\otimes H_{\ell,n_k}}^{H_{\ell,n}}\Bigl(\bigotimes_i S^{\lambda_i\setminus\mu_i}\Bigl)= H_{\ell,n}\otimes_{H_{\ell,n_1}\otimes H_{\ell,n_2}\otimes\cdots\otimes H_{\ell,n_k}} \Bigl(\bigotimes_i S^{\lambda_i\setminus\mu_i}\Bigl)^{t_{\alpha_i}}$$
 has a basis $\psi_U \otimes w$ where $\psi_U$ runs over the set \eqref{Basis for H l n1 to nk} and $w$ runs over the set \eqref{Basis2}, these pairs are in bijection with standard Young tableaux of shape $\lambda\setminus\mu$.
 
 Let $U$ as in \eqref{Basis for H l n1 to nk} and $w$ as in \eqref{Basis2} with minimal length (as a word on simple transpositions). For a fixed reduced expression for $w=s_{i_1}\cdots s_{i_p}$ the correspondence given by $$s_{i_1}\cdots s_{i_p}\to \tau_{i_1}\cdots\tau_{i_p}$$ is well defined, since the intertwining operators $\tau_i$ satisfy the braid relations (Proposition \ref{Properties Intertwiners}(\ref{Prop Intertwiner Op c})). 
 Note that $wU\in\syt(\lambda\setminus\mu)$ and
\begin{equation}
 \tau_{i_1}\cdots \tau_{i_p}(1\otimes \psi_U) = w \otimes \psi_U+\sum v\otimes \psi_U	
\end{equation}
 is an eigenvector with eigenvalue $wU$, where the length of the element $v$ is lower than the length of $w$. This gives rise to a basis for $S^D$ of eigenvectors of $\fu$, now if $S^D$ has a nonzero submodule, such submodule must contain one of this eigenvectors and by Lemma \ref{CombLemma3} we can connect a pair of two Young tableaux by a sequence of invertible intertwining operators, hence $S^D$ is irreducible.
\end{proof}

Note that from Young's orthogonal form for skew shapes \cite{GriffethNorton} we have that up to isomorphism the representation $S^D$ is independent of the choices made in this construction. Also since the automorphisms $t_\kappa$ are the identity in $G(\ell,1,n)$ it follows that its restriction to $G(\ell,1,n)$ is isomorphic to $S^{\lambda\setminus\mu}$.

\section{Proof of main theorem}\label{MainTheoremSection}

In order to prove the main result of our paper we begin by proving the following lemma. We note in advance that the proof of the lemma amounts to an effective recursion for constructing $T$ and $D$.

\begin{lemma}\label{LemmaMain}
	Let $(a_1,\ldots,a_n,\zeta^{b_1},\ldots,\zeta^{b_n})$ be a sequence satisfying the property: if $i<j$ with $a_i=a_j$ and $b_i=b_j \mod\ell$, then there are $i<k,m<j$ with $b_k=b_m=b_i\mod\ell$ and $$a_k=a_i+\ell,\quad a_m=a_i-\ell.$$
	Then there is a skew shape $D$ and a standard Young tableau $T$ of shape $D$ satisfying $$\ell \ct (T^{-1}(i))=a_i\quad\hbox{and}\quad \beta(T^{-1}(i))=b_i,\quad\hbox{ for } 1\leq i\leq n.$$ Moreover, $T$ and $D$ are unique up to diagonal slides of their connected components.
\end{lemma}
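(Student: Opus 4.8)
The plan is to induct on $n$; the inductive step is an effective recipe for where the box labelled $n$ must go, and this recipe is the algorithm promised after Theorem~\ref{MainThm}. The coordinates $\zeta^{b_i}$ decouple the problem: the hypothesis never relates two residue classes of the $b_i$ modulo $\ell$, so passing to the indices with $b_i\equiv\rho$ and dividing the corresponding $a_i$ by $\ell$ gives, for each $\rho$, a sequence satisfying the same condition with $\pm1$ in place of $\pm\ell$, and one may treat each component $D^\rho$ separately (the case $\ell=1$ going back to Cherednik \cite{CherednikRepsAHalg}). I will phrase the recursion directly, working inside the component $D^\beta$ with $\beta=b_n$. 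Truncating to the first $n-1$ entries again satisfies the hypothesis, since for a pair $i<j\le n-1$ the required witnesses $k,m$ lie strictly between $i$ and $j$; so by induction there are $D'$ and $T'\in\syt(D')$, unique up to diagonal slides of connected components, realizing the first $n-1$ entries, and it remains to adjoin a box $b$ of content $c:=a_n/\ell$ to the component $D'^{\beta}$ --- where, crucially, we are free first to slide the connected components of $D'$, since they are only determined up to slides.

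I would first record two elementary facts. First, a skew shape has no holes: if $(x,y)\notin E$ then either no box of $E$ lies weakly to the south-west of $(x,y)$ or none lies weakly to the north-east, since otherwise the rectangle spanned by two such boxes would force $(x,y)\in E$. Second, because $n$ is the largest label, adjoining $b=(x,y)$ to $T'$ produces a standard tableau on $E\cup\{b\}$ exactly when $E\cup\{b\}$ is a skew shape and $E$ contains no box to the right of $b$ in its row nor below $b$ in its column (rows and columns of skew shapes being contiguous); in particular, within one connected skew shape, the positions where the maximal label may be appended lie on pairwise distinct diagonals.

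The heart of the argument is the following dichotomy for $D'^{\beta}$. If $D'^{\beta}$ contains a box of content $c$, let $b_0=(x_0,y_0)$ be the one carrying the largest label, say $r<n$; apply the hypothesis to this index and to $n$ to obtain indices larger than $r$ and smaller than $n$ whose boxes have contents $c+1$ and $c-1$ in component $\beta$. Since these two boxes lie on diagonals adjacent to that of $b_0$ they are comparable to $b_0$, hence in its connected component; and because their labels exceed $r$ while any box of $D'^{\beta}$ lying immediately above or immediately to the left of $b_0$ would carry a smaller label, the increasing property of $T'$ forces the content-$(c+1)$ box to sit immediately to the right of $b_0$ and the content-$(c-1)$ box immediately below it. Then the $2\times2$ block having $b_0$ as its north-west corner has its other three cells in $D'^{\beta}$, its fourth cell $(x_0+1,y_0+1)$ has content $c$, and filling that concave corner keeps $D'^{\beta}$ a skew shape; maximality of $r$ rules out a box of $D'^{\beta}$ to the right of or below that cell (such a box would, together with the content-$(c+1)$ box, force a content-$c$ box north-east of $b_0$ carrying a larger label, contradicting maximality), so $b=(x_0+1,y_0+1)$ is a legal placement, and it is the only one up to slides: any content-$c$ cell is comparable to $b_0$, a cell south-west of $b_0$ violates the second fact above, and a cell further to the north-east would need the fourth cell already present. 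If instead $D'^{\beta}$ contains no box of content $c$, then by the two facts and the distinct-diagonals remark there is at most one corner of content $c$ in $D'^{\beta}$; if $D'^{\beta}$ has a box of content $c+1$ or $c-1$ these boxes all lie in a single component and $b$ must attach to it at that unique corner, directly below (resp.\ above) the extreme such box; while if $D'^{\beta}$ has no box of content $c$, $c+1$ or $c-1$, then after sliding the relevant components so that their (nonempty) ``incomparability windows'' on the diagonal $x-y=c$ share a point, one may drop in a fresh one-box component of content $c$. In every case the placement is unique up to a diagonal slide of the connected component containing $b$.

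Finally, uniqueness up to slides for the whole sequence is formal: from two realizations, deleting the box labelled $n$ gives two realizations of the truncated sequence, which by induction agree up to slides; passing to a common $(D',T')$, the two realizations differ only in the position of $b$, and the dichotomy shows that any two admissible positions differ by a diagonal slide. The step I expect to be the real obstacle is the first alternative of the dichotomy --- extracting, from the precise inequalities in the hypothesis, exactly which cell must be filled and verifying the skew-shape and standardness conditions for it; this is the only place the specific form of the hypothesis (the flanking contents $c\pm1$ occurring \emph{between} the two equal contents) gets used, everything else reducing to the two elementary facts and bookkeeping with diagonal slides.
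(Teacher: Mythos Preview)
Your proposal follows exactly the paper's strategy: reduce to a single residue class of the $b_i$ (the paper calls this the case $\ell=1$), induct on $n$, and argue that the inductive $D'$ has a unique addable box of content $a_n/\ell$. The paper's own proof is extremely terse at the key step --- it simply asserts that ``the condition implies that $D'$ is in fact a skew diagram and that $D'$ possesses an addable box $b$ with $\ct(b)=a_n$'' --- whereas you actually supply the dichotomy (content $c$ already present vs.\ not) and the verification that the concave corner $(x_0+1,y_0+1)$ is both forced and admissible. So your write-up is essentially a fleshed-out version of the same argument.

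One small imprecision worth tightening: in your second alternative, the claim that ``these boxes all lie in a single component'' is not literally true when $D'^{\beta}$ contains both a content-$(c+1)$ box and a content-$(c-1)$ box but no content-$c$ box. In that situation the incomparability forced by the absence of content-$c$ boxes pins down the relative position of the two (each is then unique), and they may well sit in distinct connected components of $D'^{\beta}$; the new box at $(x_2+1,y_2)$ then \emph{merges} those components. The conclusion (unique placement up to slides) is unaffected, but the sentence as written slightly overstates what is true of $D'^{\beta}$ before the box is added.
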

\begin{proof}
We proceed by induction on $n$. Let $\ell=1$ then we have the sequence $(a_1,\ldots,a_n)$.  In this case $D$ has only one component, if $n=1$ then $D$ has a single box with content $a_1$ and $T$ is the standard Young tableau that assigns to that box the number 1.
 For $n>1$,  by induction we suppose that the sequence $(a_1,\ldots,a_{n-1})$ possesses a standard Young tableau $T'$ on a skew diagram $D'$. Notice that the condition: ``if $i<j$ with $a_i=a_j$ then there are $i<k,m<j$ with $a_k=a_i+\ell$ and $a_m=a_i-\ell$" implies that for boxes of $D'$ such that $\ct(T^{-1}(k))=a_i+1$ and $\ct(T^{-1}(m))=a_i-1$ we have $$\ct(T^{-1}(k))=\ct(T^{-1}(i))+1 \quad\hbox{and}\quad\ct(T^{-1}(m))=\ct(T^{-1}(i))-1.$$  Then the condition implies that $D'$ is in fact a skew diagram and that $D'$ possesses an addable box $b$ with $\ct(b)=a_n$. We obtain $T$ and $D$ by adjoining $b$ to $D'$ and defining $T(b)=n$.
 
 For the case $\ell\neq 1$ then $D$ has $\ell$ components. We obtain the component $D_{i_1}$ by considering the subsequence $(a_{i_1},\ldots,a_{i_p})$ of $(a_1,\ldots,a_n)$ such that $b_{i_1}=\cdots=b_{i_p}\mod \ell$ and proceeding as before.
\end{proof}

\setcounter{section}{1}
\setcounter{definition}{0}
\begin{theorem}
	Let $M$ be an irreducible $\fu$-diagonalizable $H_{\ell,n}$-module and $m\in M$ satisfies $$u_im=a_im\quad\hbox{and}\quad \zeta_im=\zeta^{b_i}m\quad\hbox{for }1\leq i\leq n.$$ Then there is a standard Young tableau $T$ on a skew shape $D$ such that $a_i=\ell\ct(T^{-1}(i))$ and $b_i=\beta(T^{-1}(i))$ for $1\leq i\leq n$ and $M\cong S^D$, and moreover $T$ and $D$ are unique up to diagonal slides of their connected components.
\end{theorem}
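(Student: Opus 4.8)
The strategy is to reduce the statement to the purely combinatorial Lemma~\ref{LemmaMain} together with Theorem~\ref{SD irreducible}. First I would verify the hypothesis of Lemma~\ref{LemmaMain} for the weight $(a_1,\ldots,a_n,\zeta^{b_1},\ldots,\zeta^{b_n})$ of the eigenvector $m$. This is where the representation theory of $H_{\ell,n}$ (as opposed to mere combinatorics) enters: one must show that if $i<j$ with $a_i=a_j$ and $b_i\equiv b_j\bmod\ell$, then there exist indices $k,m$ strictly between with $b_k\equiv b_m\equiv b_i$ and $a_k=a_i+\ell$, $a_m=a_i-\ell$. The tools are Lemma~\ref{LemmaIntertwiners}(b) (consecutive entries of a weight are distinct) and Proposition~\ref{Properties Intertwiners}: using the intertwiners $\tau_i$ one can permute the weight. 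If no such $k$ or $m$ existed, I would apply a sequence of intertwiners to move the offending pair of equal coordinates into adjacent positions $i,i+1$; the point is that along the way $\tau_r^2=\frac{(u_r-u_{r+1}-\pi_r)(u_r-u_{r+1}+\pi_r)}{(u_r-u_{r+1})^2}$ stays nonzero precisely because none of the intermediate swaps creates a forbidden adjacency or a ``$\pm\ell$ collision,'' so $\tau_r$ is invertible and the resulting vector is nonzero; this produces a nonzero vector with $(a_i,\zeta^{b_i})=(a_{i+1},\zeta^{b_{i+1}})$, contradicting Lemma~\ref{LemmaIntertwiners}(b). Establishing that this is exactly the obstruction measured by $a_k=a_i\pm\ell$ — i.e.\ that the only way $\tau_r$ can fail to be invertible is the $\pm\ell$ condition, because $u_r-u_{r+1}\pm\pi_r$ acts on $m$ as $a_r-a_{r+1}\pm\ell$ on the relevant $\zeta$-isotypic piece — is the heart of the argument and the step I expect to be the main obstacle.

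Once the hypothesis of Lemma~\ref{LemmaMain} is checked, that lemma hands us a skew shape $D$ and a standard Young tableau $T$ of shape $D$ with $a_i=\ell\,\ct(T^{-1}(i))$ and $b_i=\beta(T^{-1}(i))$ for all $i$, unique up to diagonal slides of connected components. It remains to identify $M$ with $S^D$. For this I would produce a nonzero $H_{\ell,n}$-module map $S^D\to M$ (or $M\to S^D$). By the construction of $S^D$ in~\ref{SubsectionHlnModsSD} and the proof of Theorem~\ref{SD irreducible}, $S^D$ has a basis of $\fu$-eigenvectors indexed by $\syt(D)$, and in particular it contains an eigenvector $\psi$ whose weight is exactly $(a_1,\ldots,a_n,\zeta^{b_1},\ldots,\zeta^{b_n})$ (take the tableau $T$ itself, after arranging the connected components so that the labels match the prescribed subsequences as in the proof of Lemma~\ref{LemmaMain}). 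Sending $\psi\mapsto m$ is compatible with the $\fu$-action by construction; to see it extends to an $H_{\ell,n}$-map I would use that both modules are generated over $H_{\ell,n}$ by a single such eigenvector and invoke the intertwiner calculus — every other basis eigenvector of $S^D$ is obtained from $\psi$ by an explicit product of invertible $\tau_r$'s (Lemma~\ref{CombLemma3}), and the same product of $\tau_r$'s applied to $m\in M$ is nonzero by the same invertibility criterion, so the assignment is forced and well-defined. Concretely, by Frobenius reciprocity a map out of $S^D=\Ind_A^{H_{\ell,n}}(\otimes_i(S^{\lambda_i\setminus\mu_i})^{t_{\alpha_i}})$ is the same as an $A$-map from $\otimes_i(S^{\lambda_i\setminus\mu_i})^{t_{\alpha_i}}$ into $\Res_A M$, and the latter is exhibited by sending the distinguished eigenvector to $m$, using that the $\fu$-weights determine the $A$-action on the cyclic vector (this is Young's orthogonal form for skew shapes, \cite{GriffethNorton}, transported through $t_{\alpha_i}$).

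Finally I would conclude: the nonzero map $S^D\to M$ is surjective because $M$ is irreducible, and injective because $S^D$ is irreducible by Theorem~\ref{SD irreducible}; hence $M\cong S^D$. Uniqueness of $T$ and $D$ up to diagonal slides of connected components is inherited directly from the corresponding uniqueness clause in Lemma~\ref{LemmaMain}, since any $(D,T)$ arising this way must reproduce the fixed weight $(a_i,\zeta^{b_i})$. As the proof of Lemma~\ref{LemmaMain} is an explicit recursion — building $D$ one box at a time from the subsequences with a fixed value of $b_i\bmod\ell$, adjoining an addable box of content $a_n/\ell$ at each stage — this also delivers the promised algorithm for computing the diagram $D$ representing the isomorphism class of $M$. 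The one genuinely delicate point, worth isolating as the crux, is the verification in the first paragraph that the intertwiner $\tau_r$ is invertible at each step of the weight-permutation argument; everything else is bookkeeping built on the PBW theorem (Theorem~\ref{DAHA_PBW}), the braid relations for the $\tau_r$, and the combinatorial lemmas already in hand.
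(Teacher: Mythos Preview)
Your plan matches the paper's: verify the hypothesis of Lemma~\ref{LemmaMain} via intertwiner calculus, then build a nonzero $H_{\ell,n}$-map $S^D\to M$ using products of $\tau_i$'s and conclude by irreducibility on both sides (Theorem~\ref{SD irreducible}). Two small divergences deserve comment.

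First, Lemma~\ref{LemmaIntertwiners}(b) as stated is for $\cc G(\ell,1,n)$-modules---its proof uses self-adjointness of $\phi_i$ with respect to a $W$-invariant Hermitian form---so it does not literally apply to an arbitrary $\fu$-diagonalizable $H_{\ell,n}$-module. The paper instead reproves the $j=i+1$ exclusion directly: assuming $(a_i,\zeta^{b_i})=(a_{i+1},\zeta^{b_{i+1}})$, one computes $u_is_im=a_is_im-\ell m$ and checks that $u_i$ has a nontrivial Jordan block on $\mathrm{span}\{m,s_im\}$, contradicting $\fu$-diagonalizability. Your argument is morally the same, but you should not cite Lemma~\ref{LemmaIntertwiners}(b) as a black box here.

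Second, rather than your general move-to-adjacent reduction, the paper singles out the case $j=i+2$ and kills it with a braid-relation argument: if $\tau_i$ were invertible one would land in the forbidden $j=i+1$ case, so $\tau_i$ is not invertible, which forces $b_{i+1}=b_i$ and $a_{i+1}=a_i\pm\ell$; but then $s_i$ and $s_{i+1}$ act on $m$ by opposite signs, contradicting $s_is_{i+1}s_i=s_{i+1}s_is_{i+1}$. For $j>i+2$ the paper is terse, but the intended mechanism is your intertwiner reduction. The construction of the isomorphism $M\cong S^D$ is exactly as you sketch (the paper uses the row-reading tableau $T_0$ as base point and the word problem in the braid group to see that $\psi_{T_0}\mapsto m$ extends to a well-defined $H_{\ell,n}$-map); your Frobenius-reciprocity alternative is not used.
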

\setcounter{section}{3}
\setcounter{definition}{6}

\begin{proof} For the first part we will check that the sequence $(a_1,\ldots,a_n,\zeta^{b_1},\ldots,\zeta^{b_n})$ satisfies the hypothesis of Lemma \ref{LemmaMain}. Suppose that there is an index $j>i$ with $a_i=a_j$ and $b_i=b_j \mod \ell$, we claim that $j>i+2$. 

Suppose that $j=i+1$, then $$(a_i,\zeta^{b_i})=(a_{i+1},\zeta^{b_{i+1}}),$$ and by the relations between $u_i$ and $s_i$ we have
\begin{align}
	u_is_im & = (s_iu_{i+1}-\pi_i)m = s_iu_{i+1}m-\pi_i m= s_ia_im-\ell m = a_is_im-\ell m \label{uisi}\\
	u_{i+1}s_im & = (s_iu_i+\pi_i)m = s_iu_im+\pi_im = s_ia_im+\ell m = a_is_im+\ell m \label{ui+1si}
\end{align}

Notice that $s_im=\pm m$, since $s_i^2=1$. Then if we suppose $s_im=m$ then $u_is_im=u_im=a_im$ and if $s_im=-m$ then $u_is_im=-a_im$, both cases leads to a contradiction with \eqref{uisi}. We conclude that $s_im$ and $m$ are linearly independent and the subspace of $M$ generated by $m$ and $s_im$ is stable under $u_i$ and $u_{i+1}$. Moreover $u_i=\begin{pmatrix} a_i & -\ell \\ 0 & a_i
\end{pmatrix}$ and $u_{i+1}=\begin{pmatrix}
	a_i & \ell \\ 0 & a_i
\end{pmatrix}$, then $M$ is not diagonalizable since there exist this Jordan blocks, which is a contradiction and then $j\neq i+1$.

For $j=i+2$, note that if $\tau_i$ was invertible then by part (a) of Proposition \ref{Properties Intertwiners} $\tau_i(m)$ has eigenvalue $$(a_1,\ldots,a_{i+1},a_i,a_{i+2},\ldots,a_n,\zeta^{b_1},\ldots,\zeta^{b_{i+1}},\zeta^{b_i},\zeta^{b_{i+2}},\ldots,\zeta^{b_n})$$
with $a_i=a_{i+1}$ and $b_i=b_{i+2}$ which contradicts the previous case. Hence $\tau_i$ is not invertible, then necessarily $b_i=b_{i+1}$ and $a_{i+1}= a_i\pm \ell$.
If $a_{i+1}=a_i+\ell$ then $s_i$ is acting by 1  on $m$ and $s_{i+1}$ is acting by $-1$ on $m$, which contradicts the braid relation. Analogously if $a_{i+1}=a_i-\ell$, $s_i$ acts by $-1$ on $m$ and $s_{i+1}$ acts by $1$, contradicting the braid relation again.

Since $j>i+2$ and the operators $\tau_i$ and $\tau_j$ are invertible then by Lemma \ref{LemmaMain} there is $T$ and $D$ as required.

It remains to prove that $M\cong S^D$. Let $T_0$ be the row reading tableau and $\psi_{T_0}$ the corresponding eigenvector in $S^D$. Let $m\in M$ with $wt(T_0)=wt(m)$, we will show there exists a homomorphism $\varphi: S^D\to M$ such that $\varphi(\psi_{T_0})=m$. Given $T\in \syt(D)$ by Lemma \ref{CombLemma3} there exists a sequence of simple transpositions $s_{i_1},\ldots,s_{i_p}$ such that $s_{i_p}\cdots s_{i_1}T=T_0$ with $s_{i_k}s_{i_{k-1}}\cdots s_{i_1}T_0\in \syt(D)$, this produces a sequence of invertible intertwining operators $\tau_{i_1},\ldots,\tau_{i_p}$. Choosing $p$ minimal (as a word in the symmetric group) we define a map given by $$\varphi(\tau_{i_p}\ldots \tau_{i_1}\psi_{T})=\tau_{i_p}\cdots\tau_{i_1}m.$$
Firstly, this map is well-defined which can be proved by the solution of the word problem in the braid group since the intertwining operators $\tau_i$ satisfy the braid relations. Secondly, it is compatible with the action of $\fu$ since commutes with the action $\tau_i$. Thus we have produced a non-zero homomorphism between the irreducible $H_{\ell,n}$-modules $S^D$ and $M$, which finishes the proof.
\end{proof}

\bibliography{mibiblio}
\bibliographystyle{amsalpha}
\bigskip

{\footnotesize\textsc{
 Departamento de Matem\'atica, Universidad de Chile}}.
 
{\footnotesize \textit{ E-mail address:} \texttt{elymanosalva.ima@gmail.com}}

\end{document}